\pgfplotsset{compat=1.14}
\theoremstyle{definition}
\newtheorem{definition}{Definition}[section]
\newtheorem{example}[definition]{Example}
\newtheorem{assumption}[definition]{Assumption}
\theoremstyle{plain}
\newtheorem{proposition}[definition]{Proposition}
\newtheorem{theorem}[definition]{Theorem}
\newtheorem{lemma}[definition]{Lemma}
\newtheorem{corollary}[definition]{Corollary}
\theoremstyle{remark}
\newtheorem{remark}[definition]{Remark}
\numberwithin{equation}{section}
\newcommand{\E}{\mathbb{E}}
\renewcommand{\P}{\mathbb{P}}
\newcommand{\Q}{\mathbb{Q}}
\newcommand{\R}{\mathbb{R}}
\newcommand{\cA}{\mathcal{A}}
\newcommand{\cE}{\mathcal{E}}
\newcommand{\cF}{\mathcal{F}}
\newcommand{\cQ}{\mathcal{Q}}
\newcommand{\cV}{\mathcal{V}}
\newcommand{\cY}{\mathcal{Y}}
\newcommand{\rd}{\mathrm{d}}
\newcommand{\vp}{\varphi}
\newcommand{\pa}{\partial}
\newcommand{\hu}{\hat{u}}
\newcommand{\ox}{\overline{x}}
\newcommand{\ot}{\overline{t}}
\newcommand{\cVt}{\mathcal{V}^t_{\mathrm{loc}}}
\DeclareMathOperator*{\esssup}{ess\,sup}
\DeclareMathOperator*{\argmin}{arg\,min}
\begin{document}

\title{Parameter Uncertainty in the Kalman--Bucy Filter}
\author{Andrew L. Allan\thanks{Research supported by the Engineering and Physical Sciences Research Council [EP/L015811/1].}\hspace{2pt} and Samuel N. Cohen\thanks{Research supported by the Oxford-Man Institute for Quantitative Finance and the Oxford-Nie Financial Big Data Laboratory.}\\
Mathematical Institute, University of Oxford\\
andrew.allan\hspace{-0.6pt}{\fontfamily{ptm}\selectfont @}\hspace{-0.4pt}maths.ox.ac.uk\\
samuel.cohen\hspace{-0.6pt}{\fontfamily{ptm}\selectfont @}\hspace{-0.4pt}maths.ox.ac.uk}
\date{\today}
\maketitle

\vspace{-12pt}

\begin{abstract}
In standard treatments of stochastic filtering one first has to estimate the parameters of the model. Simply running the filter without considering the reliability of this estimate does not take into account this additional source of statistical uncertainty. We propose an approach to address this problem when working with the continuous time Kalman--Bucy filter, by making evaluations via a nonlinear expectation. We show how our approach may be reformulated as an optimal control problem, and proceed to analyse the corresponding value function. In particular we present a novel uniqueness result for the associated Hamilton--Jacobi--Bellman equation.

Keywords: Kalman--Bucy filter, parameter uncertainty, nonlinear expectation, optimal control, HJB equation.

MSC 2010: 62M20, 60G35, 93E11, 62F86, 49L25.
\end{abstract}

\section{Introduction}

In many applications one is interested in the behaviour of a process which cannot be observed directly, and must therefore rely on only partial observations in the presence of noise. Stochastic filtering is concerned with the problem of using such partial observations to infer the best estimate for the current value of the hidden process. One of the most famous results in this field is the Kalman--Bucy filter (Kalman \cite{Kalman1960}, Kalman and Bucy \cite{KalmanBucy1961}), which concerns the case where the underlying processes are Gaussian. Early applications of the Kalman--Bucy filter were to areospace engineering (see Cipra \cite{Cipra1993} or Grewal and Andrews \cite{GrewalAndrews2010}), and it has since found numerous applications in various fields from guidance and navigation to time series analysis and credit risk estimation. See also Crisan and Rozovskii \cite{CrisanRozovskii2011} for a comprehensive survey of nonlinear stochastic filtering.

The underlying equations in a filtering problem represent the dynamics of the (hidden) signal process, and the relationship between the signal and the observation processes. These equations typically involve various parameters so, naturally, in order to run the filter one must first estimate the values of these parameters. In standard treatments of stochastic filtering one simply runs the filter using an estimate of the parameters. However, this does not take into account the statistical uncertainty introduced by adopting this estimate. Particularly when there is limited available data, resulting in a lack of precision in the estimate, this should cast doubt as to the accuracy of the filter. This additional uncertainty evidently also has implications for decision making based on the outcome of the filter. By evaluating, say, the posterior variance of the signal, one can take into consideration the inherent risk associated with a decision based on the estimated values of the parameters, but this is not sufficient to account for the uncertainty associated with the error in the estimation of the parameters.

This objection is not new. Robustness in stochastic filtering has been considered in various different frameworks for both linear and nonlinear systems. Note that the term `robustness' has more than one meaning in the context of filtering. We are concerned here with robustness with respect to model uncertainty, rather than, say, continuity of the filter with respect to the observation path. Miller and Pankov \cite{MillerPankov2005}, Siemenikhin \cite{Siemenikhin2016}, Siemenikhin, Lebedev and Platonov \cite{SiemenikhinLebedevPlatonov2005} and Verd\'u and Poor \cite{VerduPoor1984} for instance consider known linear dynamics with uncertainty in the noise covariance matrices. However, as pointed out by Martin and Mintz \cite{MartinMintz1983}, the performance of a filter is typically more dependant on dynamic uncertainty than on uncertainties in either the signal or observation noise covariances.

A by now standard approach to robust estimation problems is to construct a minimax estimator, which generally speaking is designed to find the estimate which minimizes the maximum expected loss over a range of possible models, an idea that goes back at least as far as Wald \cite{Wald1945}. In addition to the aforementioned authors, this type of estimator has more recently been utilized by Borisov \cite{Borisov2008,Borisov2011}, who considers filtering of finite state Markov processes under uncertainty of the transition intensity and observation matrices. As pointed out in \cite{VerduPoor1984}, minimax estimators can be criticized for being too pessimistic. That is, since they are dependant on the specification of an often arbitrary uncertainty class, they are likely to consider very implausible models, but do not guarantee a satisfactory performance under the most statistically probable model. Moreover, the loss functions considered are invariably assumed to be quadratic which, although generally reasonable, is somewhat restrictive.

A more contemporary approach is the use of $H_\infty$ filters, which attempt to achieve a desired degree of noise attenuation by ensuring that the maximum energy gain from the noise source to the estimation error is bounded by a prescribed level. This type of filter is suitable in situations when an accurate system model or knowledge of the noise statistics is not available, although as noted in \cite{George2013}, since such estimators are designed to minimize the worst case energy gain, they generally sacrifice the average filter performance. For treatments of robust $H_\infty$ filters for linear systems see Lewis, Xie and Popa \cite{LewisXiePopa2008}, Xie, de Souza and Fu \cite{XiedeSouzaFu1991}, Yang and Ye \cite{YangYe2009}, Zhang, Xia and Shi \cite{ZhangXiaShi2009} and the references therein.

A related strategy is to combine such $H_\infty$ optimization with more classical least squares techniques, to form what are known as mixed $H_2/H_\infty$ filters. Variations on this type of filter are explored in Bernstein and Haddad \cite{BernsteinHaddad1989} and Khargonekar, Rotea and Baeyens \cite{KhargonekarRoteaBaeyens1996}, where in each case the goal is to minimize an $L^2$-type state estimation error criterion, subject to a prespecified $H_\infty$ constraint on this error. See also Aliyu and Boukas \cite{AliyuBoukas2009} for an application of this method to nonlinear filtering, or Chen and Zhou \cite{ChenZhou2002} for a game approach to mixed $H_2/H_\infty$ filter design.

Another recent technique is that of Neveux, Blanco and Thomas \cite{NeveuxBlancoThomas2007} and Zhou \cite{Zhou2010}, which penalises the sensitivity of estimation errors to parameter variations, by minimizing both the estimation error as well as its gradient with respect to the uncertain parameters.

The objective of the current work is to propose a new, substantially different approach to parameter uncertainty in stochastic filtering, specifically when working with the continuous time Kalman--Bucy filter. Based on ideas in Cohen \cite{Cohen2016discretefiltering,Cohen2017statisticaluncertainty}, we propose to make evaluations using a nonlinear expectation, represented in terms of a penalty function. This penalty represents how our uncertainty evolves in time, and is calculated by propagating our a priori uncertainty forward through time using the filter dynamics. The main result of this paper is to establish this penalty function as the unique solution of a Hamilton--Jacobi--Bellman (HJB) type partial differential equation (PDE). This means in particular that it can be calculated recursively through time alongside the standard filter.

The initial calibration of the penalty function may be based on classical statistics, but may be chosen according to the user's preference. By prescribing the initial penalty to be very large outside a particular range of parameter values, we essentially consider perturbations of the signal dynamics which are bounded by a prescribed level, recovering a setting comparable to that of $H_\infty$ filtering. Moreover, since our approach is very much an extension of the standard Kalman--Bucy filter, it combines elements of both $H_2$ and $H_\infty$ filtering techniques. The approach taken here requires one to solve a nonlinear PDE, which is computationally more expensive than typical $H_2$ and $H_\infty$ filtering methods, but allows us considerably more flexibility in terms of calibration and filter construction.

Our penalty function can be used to perform robust estimation by, for instance, constructing a minimax estimator with an arbitrary choice of loss function. Much of the aforementioned literature concerns estimators only of the state of the signal, whereas the approach taken here allows for estimation of \emph{any} random variable which depends on the current state of the signal. In particular, we can calculate robust estimates of nonlinear functionals of the signal. Moreover, we can compute any number of such estimates after the construction of a \emph{single} penalty function. That is, our penalty does not have to be recalculated for each choice of variable to be estimated. As we will see, our nonlinear expectation can also be used to define an interval estimate for any such random variable, thus providing uncertainty-robust error bounds, in analogy with classical confidence intervals.

In Section~\ref{SectionNLE}, after a brief recall of the Kalman--Bucy filter itself, we shall introduce the notion of nonlinear expectations, and see how they may be used to construct robust estimates in the context of model uncertainty. We will then see in Section~\ref{SectionReformulation} how the associated penalty may be formulated as the value function of a (deterministic) optimal control problem. This control problem turns out to be somewhat degenerate due to the highly nonlinear nature of the Hamiltonian, and the correspondingly unruly behaviour of the state trajectories. The second main contribution of this paper is to provide a thorough and rigorous treatment of this control problem and its relationship with the associated HJB equation, via techniques which are generalisable to similarly nonlinear control problems.

In Section~\ref{SectionProperties} we will establish important properties of the value function, as well as its counterpart obtained by restricting to uniformly bounded parameters. Our main result in this direction is uniqueness of solutions to the HJB equation, the proof of which will be demonstrated in Section~\ref{SectionUniqueness}. Although a thorough examination of the appropriate calibration and accuracy of our approach in comparsion to existing methods is beyond the scope of this paper, in Section~\ref{Sectionnumericalexample} we exhibit a numerical example to illustrate our results. The numerical scheme used to solve the HJB equation in this example is briefly outlined in the appendix.

\section{Estimation via nonlinear expectations}\label{SectionNLE}

Let us take an underlying filtered space $(\Omega,\cF,(\cF_t)_{t \geq 0})$, and suppose that a signal process $X$ and observation process $Y$ satisfy the following pair of linear equations
\begin{gather*}
\rd X_t = \alpha_tX_t\,\rd t + \sqrt{\beta_t}\,\rd B_t,\\
\rd Y_t = c_tX_t\,\rd t + \rd W_t,
\end{gather*}
with the initial conditions $Y_0 = 0$ and $X_0 \sim N(\mu_0,\sigma_0^2)$, for some $\mu_0 \in \R$, $\sigma_0 > 0$. Here, $B$ and $W$ are standard $\cF_t$-adapted Brownian motions which are assumed to be uncorrelated, and the parameters $\alpha \colon [0,\infty) \to \R$, $\beta \colon [0,\infty) \to [0,\infty)$ and $c \colon [0,\infty) \to \R$ are measurable, locally bounded (deterministic) functions. In this paper we shall suppose that we are uncertain of the parameters of the signal process, namely $\alpha$ and $\beta$. Uncertainty of $c$ is likely to be the subject of future work.

For simplicity of presentation we shall focus on the scalar case, where $X$ and $Y$ are one-dimensional, however all of our results extend analogously to the multivariate case (see Remark~\ref{RemarkMultivariate}).

The standard object to study in stochastic filtering is the conditional distribution process $\{\pi_t : t \geq 0\}$, given by $\pi_t\vp = \E[\vp(X_t)\,|\,\cY_t]$ for any bounded measurable function $\vp$, where $\cY_t$ is the (completed) $\sigma$-algebra generated by the observation process up to time $t$. We recall (from e.g.~Bain and Crisan \cite{BainCrisan2009} or Liptser and Shiryaev \cite{LiptserShiryaev2001}) that in this setting the conditional distribution $\pi_t$ of $X_t$ given $\cY_t$ is a normal distribution, and hence completely characterised by its mean $q_t = \E[X_t\,|\,\cY_t]$ and variance $R_t = \E\big[(X_t - q_t)^2\,\big|\,\cY_t\big]$ which satisfy the equations
\begin{gather}
\rd q_t = \alpha_tq_t\,\rd t + c_tR_t\,\rd I_t,\label{eq:KBmeandynamics}\\
\rd R_t = \big(\beta_t + 2\alpha_tR_t - c_t^2R_t^2\big)\rd t,\label{eq:KBvariancedynamics}\\
\rd I_t = \rd Y_t - c_tq_t\,\rd t,\nonumber
\end{gather}
where $I$ is known as the innovation process.

Naturally this distribution depends on the parameters $\alpha,\beta$, which in our setting are unknown. Typically one finds an estimate for the parameters, uses this estimate to compute the solution of \eqref{eq:KBmeandynamics} and \eqref{eq:KBvariancedynamics}, and can then calculate $\pi_t\vp = \E[\vp(X_t)\,|\,\cY_t]$ for a given choice of $\vp$.

Making evaluations via (standard) expectations immediately becomes problematic in the uncertain parameter setting, as uncertainty in the parameters corresponds naturally to uncertainty in the underlying probability measure. An advantage of nonlinear expectations is that they can be defined without reference to a specific measure.

Nonlinear expectations are closely linked to the theory of risk measures, which are tools used in finance to attempt to quantify the riskiness of a financial position, and are a natural object to study when dealing with model uncertainty. One typically defines a \emph{dynamic convex expectation} as a map $\cE(\,\cdot\,|\,\cY_t) \colon L^\infty(\cF) \to L^\infty(\cY_t)$ which satisfies the properties of monotonicity, translation equivariance, normalization and convexity. The convexity of the expectation corresponds to the notion that, in the context of decision making in the presence of uncertainty, diversification of a portfolio should not increase the associated risk.

A particularly nice kind of convex expectation are those which also satisfy the Fatou property. As proved separately by F\"ollmer and Schied \cite{FollmerSchied2004} and by Frittelli and Rosazza Gianin \cite{FrittelliRosazzaGianin2002} in the context of monetary risk measures, any dynamic convex expectation which satisfies the Fatou property admits a representation of the form
\begin{equation}\label{eq:NLErepresentation}
\cE(\xi\,|\,\cY_t) = \esssup_{\Q \in \cQ_t}\big\{\E_\Q[\xi\,|\,\cY_t] - \kappa_t(\Q)\big\},
\end{equation}
where $\cQ_t$ is a suitably chosen space of equivalent probability measures, and $\kappa_t$ is a $\cY_t$-measurable `penalty' function. For a more thorough discussion of the theory of nonlinear expectations and risk measures, see e.g.~Cohen \cite{Cohen2016discretefiltering}, Delbaen, Peng and Rosazza Gianin \cite{DelbaenPengRosazzaGianin2010} or F\"ollmer and Schied \cite{FollmerSchied2002,FollmerSchied2004}.

We shall now see how such nonlinear expectations can be used to estimate functions of the hidden process $X$ given our observations of $Y$, taking parameter uncertainty into account. The approach we pursue has not previously been explored in a continuous time setting, but is analogous to the dynamic generator, uncertain prior setting in Cohen \cite{Cohen2016discretefiltering}. The first thing to notice is that the probability measure which governs our system depends on the choices of $\alpha,\beta,\mu_0$ and $\sigma_0$. That is, any given choice of these parameters corresponds to a probability measure $\P^{\alpha,\beta,\mu_0,\sigma_0}$ which determines the behaviour of the processes $X$ and $Y$. We associate to the set of all possible choices of parameters the corresponding class of all admissible probability measures, and suppose that the `true' measure belongs to this class.

For the purpose of the following definition, we allow the parameters $\alpha,\beta$ to depend on the observation process $Y$. That is, we consider the wider class of all integrable $\cY_t$-predictable processes $(\alpha,\beta) \colon \Omega \times [0,\infty) \to \R \times [0,\infty)$ whose paths are locally bounded. Motivated by the representation in \eqref{eq:NLErepresentation}, for a given uncertainty aversion parameter $k_1 > 0$ and exponent $k_2 \geq 1$, we define for $\xi \in L^\infty(\cF)$,
\begin{equation}\label{eq:defnEpstoch}
\cE(\xi\,|\,\cY_t) = \esssup_{\alpha,\beta,\mu_0,\sigma_0}\bigg\{\E^{\alpha,\beta,\mu_0,\sigma_0}[\xi\,|\,\cY_t] - \bigg(\frac{1}{k_1}\bigg(\int_0^t \gamma(s,\alpha_s,\beta_s)\,\rd s + \kappa_0(\mu_0,\sigma_0^2)\bigg)\bigg)^{k_2}\bigg\},
\end{equation}
where we write $\E^{\alpha,\beta,\mu_0,\sigma_0}$ for the expectation corresponding to the measure $\P^{\alpha,\beta,\mu_0,\sigma_0}$. Here the essential supremum is taken over all possible parameters $\mu_0,\sigma_0$ for the initial distribution of the signal, and over the class of all $\cY_t$-predictable processes $\alpha,\beta$, as described above. Allowing the parameters $\alpha$ and $\beta$ to be drawn from the class of $\cY_t$-predictable processes (rather than just deterministic functions) in this definition is not strictly necessary. However, if the essential supremum is attained then the optimum choice of parameters will in general belong to this class, so it is natural to consider it here.

The first term inside the essential supremum is a version of the conditional expectation of $\xi$, corresponding to the measure associated with a particular choice of the parameters. We consider the value of this conditional expectation for all choices of the parameters, and then discount this value by a penalty associated with the corresponding parameters. This penalty is composed of the nonnegative functions $\gamma$ and $\kappa_0$, where $\gamma$ may be deterministic or $\cY_t$-predictable. These penalty functions represent our opinion of how plausible different values of the parameters are. That is, we penalise different choices of parameters according to how unreasonable we consider them to be. Here $\gamma$ and $\kappa_0$ may be chosen arbitrarily, though in practice may be based on a priori statistical analysis, such as via the negative log-likelihood ratio with respect to some reference parameters, as in \cite{Cohen2016discretefiltering,Cohen2017statisticaluncertainty}. The parameters $k_1,k_2$ are included for generality, but will make essentially no change to our analysis; see \cite{Cohen2016discretefiltering} or \cite{Cohen2017statisticaluncertainty} for a proper discussion of their role.

Assuming that the functions $\gamma$ and $\kappa_0$ achieve the value zero, that is there exists a choice of reference parameters which have zero associated penalty, it follows that $\cE(\cdot\,|\,\cY_t) \colon L^\infty(\cF) \to L^\infty(\cY_t)$ defines a dynamic convex expectation which satisfies the Fatou property.

Provided that the reference parameters are chosen reasonably well, the nonlinear expectation $\cE(\xi\,|\,\cY_t)$ will typically overestimate the true value of $\xi$. We may therefore think of $\cE(\xi\,|\,\cY_t)$ as an `upper' expectation of $\xi$, the corresponding `lower' expectation being given by $-\cE(-\xi\,|\,\cY_t)$. The interval estimate $\big[\hspace{-2pt}-\hspace{-1pt}\cE(-\xi\,|\,\cY_t),\, \cE(\xi\,|\,\cY_t)\big]$ may then be considered as a range of plausible values that the true value of $\xi$ could take. This is of course analogous to the notion of confidence/credible intervals from classical statistics, and in fact this connection can be made rigorous in the case when $k_2 \to \infty$, as discussed in \cite{Cohen2017statisticaluncertainty}.

One can also use the nonlinear expectation to obtain a robust point estimate of $\xi$, by finding the $\cY_t$-measurable random variable $\hat{\xi}$ which minimizes $\cE(\psi(\xi - \hat{\xi})\,|\,\cY_t)$ for a given loss function $\psi$. This defines a type of minimax estimator. However, note that the inclusion of the penalty function ensures that parameter choices which are considered to be very unreasonable will never be considered, thus avoiding the previously mentioned criticism of other minimax estimators of being too pessimistic. Moreover, whereas many such estimators assume that the loss function is quadratic, the function $\psi$ in our formulation may be chosen completely arbitrarily.

The focus of the current work is the application of the approach described above to our filtering problem. Correspondingly, we shall henceforth restrict our attention to evaluations of random variables of the form $\xi = \vp(X_t)$, where $X$ is the (unobserved) signal process and $\vp$ is an arbitrary bounded measurable function.

We would like to have a version of our nonlinear expectation which depends directly on a particular observation of $Y$. By the Doob--Dynkin lemma, the conditional expectation $\E^{\alpha,\beta,\mu_0,\sigma_0}[\vp(X_t)\,|\,\cY_t]$ may be considered as a function of $\{Y_s : 0\leq s\leq t\}$, and we will henceforth write
$$\E^{\alpha,\beta,\mu_0,\sigma_0}[\vp(X_t)\,|\,y] := \E^{\alpha,\beta,\mu_0,\sigma_0}[\vp(X_t)\,|\,\cY_t](y)$$
for a given realisation $y = \{y(s) : 0\leq s\leq t\}$ of $Y$ up to time $t$. As we wish to consider this expectation for a potentially uncountable collection of functions $\vp$, one should take care to make sure that they are defined simultaneously. This may be done for all continuous bounded $\vp$ by first considering a countable collection of functions and appealing to the Stone--Weierstrass theorem.

As all the processes involved are $\cY_t$-adapted, restricting to a given realisation $y$ of $Y$ gives the following deterministic version of \eqref{eq:defnEpstoch},
\begin{align}
&\cE(\vp(X_t)\,|\, y)\nonumber\\
&= \sup_{\alpha,\beta,\mu_0,\sigma_0}\bigg\{\E^{\alpha,\beta,\mu_0,\sigma_0}[\vp(X_t)\,|\,y] - \bigg(\frac{1}{k_1}\bigg(\int_0^t \gamma(s,\alpha_s,\beta_s)\,\rd s + \kappa_0(\mu_0,\sigma_0^2)\bigg)\bigg)^{k_2}\bigg\},\label{eq:NLEony}
\end{align}
where the supremum is taken over all initial parameters $\mu_0,\sigma_0$ and all (deterministic) measurable, locally bounded functions $\alpha,\beta$.

Once we have fixed an observation path $y$, the set of all possible conditional distributions of $X_t$ given $y$ is simply the set of all (nondegenerate) normal distributions, which we naturally parametrise by their mean $\mu \in \R$ and variance $\sigma^2 > 0$. Recalling that the conditional variance $R_t$ does not depend on the observation process $Y$ (as can be seen in \eqref{eq:KBvariancedynamics}), we shall write $(q(y)_t,R_t)$ for the parameters of the conditional distribution once we have fixed the path $y$. Although it is not obvious that these variables are well-defined for every $y$ and every choice of parameters, we will not dwell on such measure theoretic issues here; see Remark~\ref{Remarkdefns}.

Our next aim is to show how the variable $\cE(\vp(X_t)\,|\, y)$ is related to the following `value function'. Here and throughout, we adopt the convention that $\inf\emptyset = \infty$. We define
\begin{equation}\label{eq:defnkappa}
\kappa_t(\mu,\sigma^2\,|\,y) = \inf\bigg\{\int_0^t\gamma(s,\alpha_s,\beta_s)\,\rd s + \kappa_0(\mu_0,\sigma^2_0) \ \bigg| \ \genfrac{}{}{0pt}{}{\alpha,\beta,\mu_0,\sigma_0 \quad \text{such that}}{\big(q(y)_t,R_t\big) = (\mu,\sigma^2)}\bigg\},
\end{equation}
where the infimum is taken over all measurable, locally bounded functions $\alpha,\beta$ and all initial parameters $\mu_0,\sigma_0$ such that the corresponding mean $q$ and variance $R$ of the conditional distribution of $X_t$ given $\cY_t$, under the measure $\P^{\alpha,\beta,\mu_0,\sigma_0}$, evaluated on the observation path $y$, are equal to $\mu$ and $\sigma^2$ respectively. As the following proposition suggests, the function $\kappa_t$ may be viewed as the convex dual of our nonlinear expectation.

In the following we will write $\phi^{\mu,\sigma^2}$ for the probability density function of a $N(\mu,\sigma^2)$ distribution.

\begin{proposition}
For every observation $y$ of $Y$ up to time $t$, we have that
\begin{equation}\label{eq:NLErelatestokappa}
\cE(\vp(X_t)\,|\, y) = \sup_{(\mu,\sigma) \in \R \times (0,\infty)}\bigg\{\int_\R\vp(x)\phi^{\mu,\sigma^2}(x)\,\rd x - \bigg(\frac{1}{k_1}\kappa_t(\mu,\sigma^2\,|\,y)\bigg)^{k_2}\bigg\}.
\end{equation}
\end{proposition}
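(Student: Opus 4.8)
The plan is to establish \eqref{eq:NLErelatestokappa} by rewriting the supremum in \eqref{eq:NLEony} in two stages, first optimising over all parameter choices that produce a \emph{given} terminal conditional distribution $(\mu,\sigma^2)$, and then optimising over $(\mu,\sigma^2)$. The key observation is that the first term in \eqref{eq:NLEony}, namely $\E^{\alpha,\beta,\mu_0,\sigma_0}[\vp(X_t)\,|\,y]$, depends on the parameters $(\alpha,\beta,\mu_0,\sigma_0)$ and the path $y$ \emph{only} through the resulting conditional law $\pi_t$, which (as recalled in the excerpt) is the normal distribution $N(q(y)_t,R_t)$. Hence for any admissible choice of parameters we have $\E^{\alpha,\beta,\mu_0,\sigma_0}[\vp(X_t)\,|\,y] = \int_\R \vp(x)\phi^{q(y)_t,R_t}(x)\,\rd x$.

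Concretely, I would argue as follows. Fix the observation path $y$. Partition the set of all admissible $(\alpha,\beta,\mu_0,\sigma_0)$ according to the value of $(q(y)_t,R_t)$; for $(\mu,\sigma^2) \in \R \times (0,\infty)$ let $\cA_{\mu,\sigma^2}$ denote the set of those parameter choices for which $(q(y)_t,R_t) = (\mu,\sigma^2)$. On $\cA_{\mu,\sigma^2}$ the first term inside the braces of \eqref{eq:NLEony} is the constant $\int_\R\vp(x)\phi^{\mu,\sigma^2}(x)\,\rd x$, so
\begin{align*}
\cE(\vp(X_t)\,|\,y) &= \sup_{(\mu,\sigma)}\sup_{(\alpha,\beta,\mu_0,\sigma_0) \in \cA_{\mu,\sigma^2}}\bigg\{\int_\R\vp(x)\phi^{\mu,\sigma^2}(x)\,\rd x - \bigg(\frac{1}{k_1}\Big(\int_0^t\gamma(s,\alpha_s,\beta_s)\,\rd s + \kappa_0(\mu_0,\sigma_0^2)\Big)\bigg)^{k_2}\bigg\}\\
&= \sup_{(\mu,\sigma)}\bigg\{\int_\R\vp(x)\phi^{\mu,\sigma^2}(x)\,\rd x - \inf_{(\alpha,\beta,\mu_0,\sigma_0) \in \cA_{\mu,\sigma^2}}\bigg(\frac{1}{k_1}\Big(\int_0^t\gamma\,\rd s + \kappa_0\Big)\bigg)^{k_2}\bigg\},
\end{align*}
where in the second line I pull the first term out of the inner supremum and use that $\sup(a - f) = a - \inf f$. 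Since $t \mapsto t^{k_2}$ is increasing on $[0,\infty)$ and $\gamma,\kappa_0 \geq 0$, the map $z \mapsto (z/k_1)^{k_2}$ is monotone, so the infimum over $\cA_{\mu,\sigma^2}$ commutes with it, giving $\inf_{\cA_{\mu,\sigma^2}}(\,\cdots\,)^{k_2} = \big(\frac{1}{k_1}\inf_{\cA_{\mu,\sigma^2}}(\int_0^t\gamma\,\rd s + \kappa_0)\big)^{k_2}$. By the definition \eqref{eq:defnkappa}, this last infimum is exactly $\kappa_t(\mu,\sigma^2\,|\,y)$ (with the convention $\inf\emptyset = \infty$ handling those $(\mu,\sigma^2)$ which are not attainable, in which case the corresponding bracket is $-\infty$ and does not affect the outer supremum). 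This yields \eqref{eq:NLErelatestokappa}.

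The main technical point to handle carefully — and what I expect to be the only real obstacle — is the measure-theoretic justification that $\E^{\alpha,\beta,\mu_0,\sigma_0}[\vp(X_t)\,|\,y]$ is genuinely well-defined as a function of $y$ and equals $\int_\R\vp\,\phi^{q(y)_t,R_t}$ simultaneously for all bounded measurable $\vp$, together with the measurability/well-posedness of $q(y)_t$ for arbitrary admissible parameter paths. The excerpt already flags this (the Doob--Dynkin and Stone--Weierstrass remarks, and the pointer to Remark~\ref{Remarkdefns}), so I would invoke those: fix a countable convergence-determining family of continuous bounded functions, define the conditional expectations off a common null set, identify the conditional law as $N(q(y)_t,R_t)$ via the Kalman--Bucy equations \eqref{eq:KBmeandynamics}--\eqref{eq:KBvariancedynamics}, and extend to all bounded measurable $\vp$ by a monotone class argument. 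A secondary small point is to note that swapping the order of the two suprema is trivially valid (it is a supremum over a product-indexed family), and that the outer supremum in the final expression is unaffected by the $(\mu,\sigma^2)$ with $\kappa_t = \infty$.
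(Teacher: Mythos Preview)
Your proposal is correct and follows essentially the same approach as the paper: both proofs decompose the outer supremum by fixing the terminal conditional law $(\mu,\sigma^2)$, use that the conditional expectation is then the constant $\int_\R\vp\,\phi^{\mu,\sigma^2}$, and identify the remaining inner optimisation with $\kappa_t(\mu,\sigma^2\,|\,y)$. You are slightly more explicit than the paper in spelling out the monotonicity of $z \mapsto (z/k_1)^{k_2}$ to commute the infimum with the power, and in handling unattainable $(\mu,\sigma^2)$ via $\inf\emptyset = \infty$, but these are cosmetic differences rather than a distinct route.
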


\begin{proof}
Let $\mu \in \R$, $\sigma \in (0,\infty)$, and take any $\alpha,\beta,\mu_0,\sigma_0$ such that $(q(y)_t,R_t) = (\mu,\sigma^2)$, i.e.~the conditional distribution of $X_t$ given $\cY_t$ under the measure $\P^{\alpha,\beta,\mu_0,\sigma_0}$, evaluated on the observation $y$, is a normal distribution with mean $\mu$ and variance $\sigma^2$. We then have that
\begin{align*}
\E&^{\alpha,\beta,\mu_0,\sigma_0}[\vp(X_t)\,|\,y] - \bigg(\frac{1}{k_1}\bigg(\int_0^t \gamma(s,\alpha_s,\beta_s)\,\rd s + \kappa_0(\mu_0,\sigma_0^2)\bigg)\bigg)^{k_2}\\
&= \int_\R \vp(x)\phi^{\mu,\sigma^2}(x)\,\rd x - \bigg(\frac{1}{k_1}\bigg(\int_0^t \gamma(s,\alpha_s,\beta_s)\,\rd s + \kappa_0(\mu_0,\sigma_0^2)\bigg)\bigg)^{k_2}.
\end{align*}
Taking a supremum over all such choices of $\alpha,\beta,\mu_0$ and $\sigma_0$, we obtain
\begin{align*}
\sup_{\genfrac{}{}{0pt}{}{\alpha,\beta,\mu_0,\sigma_0 \ \, \text{such that}}{(q(y)_t,R_t) = (\mu,\sigma^2)}}&\bigg\{\E^{\alpha,\beta,\mu_0,\sigma_0}[\vp(X_t)\,|\,y] - \bigg(\frac{1}{k_1}\bigg(\int_0^t \gamma(s,\alpha_s,\beta_s)\,\rd s + \kappa_0(\mu_0,\sigma_0^2)\bigg)\bigg)^{k_2}\bigg\}\\
&= \int_\R \vp(x)\phi^{\mu,\sigma^2}(x)\,\rd x - \bigg(\frac{1}{k_1}\kappa_t(\mu,\sigma^2\,|\,y)\bigg)^{k_2}.
\end{align*}
Taking a supremum over all $\mu \in \R$ and $\sigma > 0$ then gives
\begin{align*}
\sup_{\alpha,\beta,\mu_0,\sigma_0}&\bigg\{\E^{\alpha,\beta,\mu_0,\sigma_0}[\vp(X_t)\,|\,y] - \bigg(\frac{1}{k_1}\bigg(\int_0^t \gamma(s,\alpha_s,\beta_s)\,\rd s + \kappa_0(\mu_0,\sigma_0^2)\bigg)\bigg)^{k_2}\bigg\}\\
&= \sup_{(\mu,\sigma) \in \R \times (0,\infty)}\bigg\{\int_\R\vp(x)\phi^{\mu,\sigma^2}(x)\,\rd x - \bigg(\frac{1}{k_1}\kappa_t(\mu,\sigma^2\,|\,y)\bigg)^{k_2}\bigg\}
\end{align*}
and, by \eqref{eq:NLEony}, we have the result.
\end{proof}

It also follows from the previous Proposition that
\begin{align}
&-\cE(-\vp(X_t)\,|\, y)\nonumber\\
&= \inf_{\alpha,\beta,\mu_0,\sigma_0}\bigg\{\E^{\alpha,\beta,\mu_0,\sigma_0}[\vp(X_t)\,|\,y] + \bigg(\frac{1}{k_1}\bigg(\int_0^t \gamma(s,\alpha_s,\beta_s)\,\rd s + \kappa_0(\mu_0,\sigma_0^2)\bigg)\bigg)^{k_2}\bigg\}\nonumber\\
&= \inf_{(\mu,\sigma) \in \R \times (0,\infty)}\bigg\{\int_\R\vp(x)\phi^{\mu,\sigma^2}(x)\,\rd x + \bigg(\frac{1}{k_1}\kappa_t(\mu,\sigma^2\,|\,y)\bigg)^{k_2}\bigg\}.\label{eq:NLJrelatestokappa}
\end{align}

\section{Reformulation as an optimal control problem}\label{SectionReformulation}

If one could obtain an explicit expression, or approximate expression, for the function $\kappa_t(\cdot,\cdot\,|\,y)$, then it would be a relatively straightforward task to compute the supremum in \eqref{eq:NLErelatestokappa} or the infimum in \eqref{eq:NLJrelatestokappa} for any sufficiently regular function $\vp$. We have therefore reduced our problem to understanding the behaviour of the function $\kappa$.

Our fundamental observation is that the expression for $\kappa$ given in \eqref{eq:defnkappa} has the familiar form of the value function of an optimal control problem. In this view, we interpret $\gamma$ as the running cost, $\kappa_0$ as the initial cost, $(\alpha,\beta)$ as a control, and $(q(y),R)$ as the state trajectory corresponding to a given control.

In the following we write $\cA$ for the space of controls, that is, the set of measurable, locally bounded functions $(\alpha,\beta) \colon [0,\infty) \to \R \times [0,\infty)$, and we denote the domain of the state trajectories by
$$U := \R \times (0,\infty).$$
The other key ingredient for this control problem is the dynamics of the state trajectories $q$ and $R$, given by \eqref{eq:KBmeandynamics} and \eqref{eq:KBvariancedynamics}. To summarise, we currently have:

\vspace{10pt}
\noindent\textbf{Control problem for $\kappa$.} \emph{For every $t > 0$ and $(\mu,\sigma^2) \in U$, find
$$\kappa_t(\mu,\sigma^2\,|\,y) = \inf\bigg\{\int_0^t\gamma(s,\alpha_s,\beta_s)\,\rd s + \kappa_0(\mu_0,\sigma^2_0) \ \bigg| \ \genfrac{}{}{0pt}{}{\alpha,\beta,\mu_0,\sigma_0 \quad \text{such that}}{\big(q(y)_t,R_t\big) = (\mu,\sigma^2)}\bigg\},$$
where $q,R$ satisfy \eqref{eq:KBmeandynamics} and \eqref{eq:KBvariancedynamics}, and the infimum is taken over all $(\alpha,\beta) \in \cA$ and all $(\mu_0,\sigma_0^2) \in U$ such that $(q(y)_t,R_t) = (\mu,\sigma^2)$.}
\vspace{10pt}

Notice that there is no expectation in the expression for the `value function' above. This is therefore a \emph{pathwise} stochastic optimal control problem. However, we anticipate being able to transform the SDE \eqref{eq:KBmeandynamics} into an ODE with random coefficients. The problem may then be treated by restricting our attention to an arbitrary observation path $y$ and considering the resulting \emph{deterministic} control problem.

The natural parameters, in the sense of exponential families, for a $N(\mu,\sigma^2)$ distribution are $\big(\mu/\sigma^2,-1/2\sigma^2)$. This motivates us to calculate
\begin{gather}
\rd\bigg(\frac{q}{R}\bigg)_{\hspace{-3pt}t} = -\frac{q_t}{R_t}\bigg(\alpha_t + \frac{\beta_t}{R_t}\bigg)\rd t + c_t\,\rd Y_t,\label{eq:SDEforqoverR}\\
\frac{\rd}{\rd t}\bigg(\frac{1}{R_t}\bigg) = -\frac{\beta_t}{R_t^2} - \frac{2\alpha_t}{R_t} + c_t^2.\nonumber
\end{gather}

Notice that the stochastic integral in \eqref{eq:SDEforqoverR} no longer involves the variable $R$. Since we assume that the parameter $c$ is known, once we have fixed an observation path $y$, we will also be able to fix the value of this integral.

Recall (from e.g.~\cite{BainCrisan2009}) that the innovation process $I$ is a $\cY_t$-adapted Brownian motion under $\P^{\alpha,\beta,\mu_0,\sigma_0}$. It then follows from the Burkholder--Davis--Gundy inequality and the Kolmogorov continuity criterion (see e.g.~\cite{CohenElliott2015}), that for $\P^{\alpha,\beta,\mu_0,\sigma_0}$-almost any $\omega \in \Omega$, the function $t \mapsto \big(\int_0^tc_s\,\rd Y_s\big)(\omega)$ is locally H\"older continuous with any exponent strictly less than $1/2$.

Note that although the measures $\P^{\alpha,\beta,\mu_0,\sigma_0}$ are not equivalent on $\cF_t$, they are equivalent on $\cY_t$. Since the stochastic integral is adapted to $\cY_t$, we can choose a version of this integral, independently of our choice of parameters, which is H\"older continuous for every $\omega \in \Omega$. Using this version, let us now fix an $\omega \in \Omega$ and the corresponding observation path $y$ (so that $Y_\cdot(\omega) = y$), and then define
$$\eta_t := \bigg(\int_0^tc_s\,\rd Y_s\bigg)(\omega) \qquad \text{for} \quad\ t \geq 0.$$
We may then rewrite equation \eqref{eq:SDEforqoverR} as the ODE
$$\frac{\rd}{\rd t}\bigg(\frac{q(y)_t}{R_t} - \eta_t\bigg) = -\frac{q(y)_t}{R_t}\bigg(\alpha_t + \frac{\beta_t}{R_t}\bigg).$$

We are now ready to make our change of variables. We transform the state trajectories from $(q(y),R)$ to $w$ by the relation
$$w(t) = (w_1(t),w_2(t)) = \bigg(\frac{q(y)_t}{R_t} - \eta_t, \ \frac{1}{R_t}\bigg).$$
By the above, $w$ satisfies the following ODEs
\begin{gather}
\frac{\rd w_1}{\rd t}(t) = -\big(w_1(t) + \eta_t\big)\big(\alpha_t + \beta_tw_2(t)\big),\nonumber\\
\frac{\rd w_2}{\rd t}(t) = -\beta_tw_2(t)^2 - 2\alpha_tw_2(t) + c_t^2,\label{eq:w2ODE}
\end{gather}
which may be equivalently expressed as
\begin{equation}\label{eq:controlODE}
\frac{\rd w}{\rd s}(s) = f(w(s),s,\alpha_s,\beta_s) \qquad \text{for} \quad 0 < s < t,
\end{equation}
where the function $f$ is given by
\begin{equation}\label{eq:defnf}
f(z,t,a,b) = \bigg(\hspace{-4pt}\begin{array}{c}
-(z_1 + \eta_t)(a + bz_2)\\
-bz_2^2 - 2az_2 + c_t^2
\end{array}\hspace{-4pt}\bigg) \qquad \text{for} \quad\ z = (z_1,z_2) \in U.
\end{equation}

We also make the natural change of variables in the terminal values of $q$ and $R$. That is, for a fixed time $t$, we make the transformation from $(\mu,\sigma^2)$ to $x$ via
\begin{equation}\label{eq:changevariablesmusigmaxt}
x = (x_1,x_2) = \bigg(\frac{\mu}{\sigma^2} - \eta_t, \ \frac{1}{\sigma^2}\bigg),
\end{equation}
noting that the map $(\mu,\sigma) \mapsto x$ is a bijection from $U$ to itself. With this change of variables, we define a new function $v$ by
\begin{equation}\label{eq:changevariablesvkappa}
v(x,t) = \kappa_t(\mu,\sigma^2\,|\,y), \qquad \text{for} \quad x \in U,\ t \in [0,\infty),
\end{equation}
where we suppress the dependence on $y$ in the notation, as it will henceforth be considered to be fixed. We also define a new initial cost function $v_0 \colon U \to \R$, defined such that
$$v_0(z) = \kappa_0(\mu_0,\sigma_0^2), \qquad \text{whenever} \qquad z = \bigg(\frac{\mu_0}{\sigma_0^2},\frac{1}{\sigma_0^2}\bigg) \in U.$$
Recalling the definition of $\kappa$, given in \eqref{eq:defnkappa}, we observe that $v$ may be expressed as
$$v(x,t) = \inf\bigg\{\int_0^t\gamma(s,\alpha_s,\beta_s)\,\rd s + v_0(w(0)) \ \bigg| \ \genfrac{}{}{0pt}{}{(\alpha,\beta) \in \cA,\ w(0) \in U}{\text{such that}\quad w(t) = x}\bigg\},$$
where the infimum is taken over all controls $(\alpha,\beta)$ and initial values $w(0)$ such that the corresponding state trajectory $w(\cdot)$ satisfies $w(t) = x$.

\begin{remark}
The change of variables we have chosen is, to our knowledge, the simplest one in this setting which successfully reduces our SDE into an ODE with random coefficients, but it is by no means unique. In general this type of result can be achieved via Doss--Sussmann transformations. In particular, the transformation exhibited in Buckdahn and Ma \cite{BuckdahnMa2007} is also sufficient for our purposes, and results in an ODE with comparable complexity.
\end{remark}

To consider the function $v$ as a value function in a strictly classical sense, we should express it as an infimum over only the space of controls, without specifying in its definition that the initial point $w(0)$ should be in $U$. One should notice however that for a given terminal value $w(t) = x$, there may exist controls such that the corresponding state trajectory $w$ leaves the domain $U$. Moreover, since the state trajectories satisfy an ODE \eqref{eq:controlODE} which has quadratic growth (note the quadratic terms in \eqref{eq:defnf}), some choices of control may result in trajectories with an infinite speed of propagation, which in particular may `blow up' in a finite time.

\begin{example}
Suppose that $c \equiv 2$, and consider the control given by $\alpha \equiv 0$ and $\beta \equiv 1$. Recall the equation \eqref{eq:w2ODE} for the second component of $w$, which in this case becomes
$$\frac{\rd w_2}{\rd s}(s) = 4 - w_2(s)^2.$$
If $x_2 = w_2(t) = 1$, then the solution of this equation is given by $w_2(s) = (6 - 2e^{4(t-s)})/(3 + e^{4(t-s)})$, which leaves the domain $U$ when $s = t - \frac{1}{4}\log(3)$.

If instead $x_2 = w_2(t) = 3$, then the solution is given by $w_2(s) = (10 + 2e^{4(t-s)})/(5 - e^{4(t-s)})$, which `blows up' when $s = t - \frac{1}{4}\log(5)$.
\end{example}

The previous example demonstrates some undesirable behaviour of the state trajectories. However we can prevent these types of behaviour by assigning an infinite initial cost to such trajectories, as follows. Let us extend the domain of the state trajectories $w$ to the entirety of $\R^2$. We extend the function $f$ by setting
\begin{equation}\label{eq:defnfx2neg}
f(z,t,a,b) = \bigg(\hspace{-4pt}\begin{array}{c}
-(z_1 + \eta_t)a\\
c_t^2
\end{array}\hspace{-4pt}\bigg) \qquad \text{for} \quad\ z = (z_1,z_2) \in \R^2 \setminus U,
\end{equation}
noting that, provided $c$ is continuous (which we shall assume), the function $f \colon \R^2 \times [0,\infty) \times \R \times [0,\infty) \to \R$ is continuous in each of its variables. Provided that the resulting extended function is continuous, how we choose to define the function $f$ for $x \in \R^2 \setminus U$ is not important, since we will assign an infinite initial cost to any trajectory which leaves $U$. It will however be useful later to have $f$ explicitly defined in the whole of $\R^2$.

We suppose that the trajectories $w$ satisfy the ODE \eqref{eq:controlODE} with this extended function $f$. We also extend the domain of the initial cost function $v_0$ to $\R^2$ by defining
$$v_0(z) = \infty \qquad \text{for} \quad\ z \in \R^2 \setminus U.$$
Although we don't actually obtain an initial point for trajectories which `blow up' in a finite time, we may simply prescribe an infinite `initial' cost to all such trajectories.

For each terminal value $x \in U$, we consider solutions $w(\cdot)$ of \eqref{eq:controlODE} with the terminal condition
\begin{equation}\label{eq:terminalcondx}
w(t) = x \in U.
\end{equation}
We will sometimes write $w(\cdot) = w(\cdot\,;x,t,\alpha,\beta)$ to make explicit the dependence of $w(\cdot)$ on $x,t,\alpha$ and $\beta$.

We define a cost functional $J$, for $x \in U$, $t \geq 0$ and $(\alpha,\beta) \in \cA$, by
$$J(x,t;\alpha,\beta) = \int_0^t\gamma(s,\alpha_s,\beta_s)\,\rd s + v_0(w(0)),$$
where $w(0) = w(0;x,t,\alpha,\beta)$ is the initial value of the solution $w(\cdot)$ of \eqref{eq:controlODE} and \eqref{eq:terminalcondx}, and note that $v$ is then simply given by
\begin{equation}\label{eq:valuefuncdefnJ}
v(x,t) = \inf_{(\alpha,\beta) \in \cA}J(x,t;\alpha,\beta).
\end{equation}
Naturally, we have that $v(x,0) = v_0(x)$ for all $x \in U$. As noted above, the initial value $w(0)$ may not always lie in $U$, but such trajectories can be ignored when taking the infimum in \eqref{eq:valuefuncdefnJ} as we have assigned them an infinite cost.

To summarise, we now have:

\vspace{10pt}
\noindent\textbf{Control Problem for $v$.} \emph{For every $x \in U$ and $t > 0$, find
\begin{equation}\label{eq:valuefuncdefn}
v(x,t) = \inf_{(\alpha,\beta) \in \cA}J(x,t;\alpha,\beta),
\end{equation}
where
\begin{equation*}
J(x,t;\alpha,\beta) = \int_0^t\gamma(s,\alpha_s,\beta_s)\,\rd s + v_0(w(0)),
\end{equation*}
and $w(\cdot) = w(\cdot\,;x,t,\alpha,\beta)$ satisfies $w(t) = x$ and
\begin{equation*}
\frac{\rd w}{\rd s}(s) = f(w(s),s,\alpha_s,\beta_s) \qquad \text{for} \quad 0 < s < t,
\end{equation*}
where the function $f$ is given by \eqref{eq:defnf} and \eqref{eq:defnfx2neg}.}
\vspace{10pt}

We have derived a (deterministic) optimal control problem, whose value function $v$ determines the function $\kappa$ via a simple change of variables. We will henceforth refer to \eqref{eq:valuefuncdefn} as the definition of $v$.

\begin{remark}\label{Remarkdefns}
In Section~\ref{SubsectionRegularity} it will be shown in particular that $v$ is a continuous function of $x$ and $t$. Given this fact, if one does take the value function of the control problem described above as the definition of $v$, then one can define $\kappa$ by the change of variables given by \eqref{eq:changevariablesmusigmaxt} and \eqref{eq:changevariablesvkappa}, and continuity of $\kappa$ is then immediate. One can then read the supremum in \eqref{eq:NLErelatestokappa} as the \emph{definition} of the nonlinear expectation evaluated on the observation path $y$, for any measurable function $\vp$ such that the supremum takes a finite value. In this view, we can avoid any and all measure theoretic issues involved in defining all of the aforementioned objects.
\end{remark}

\section{Properties of the value function}\label{SectionProperties}

\subsection{Relationship with the HJB equation}

Having derived the optimal control problem described above, we now proceed to study the value function $v$ in some detail. In particular, we aim to identify it as the unique solution of the associated Hamilton--Jacobi--Bellman (HJB) equation:
\begin{equation}\label{eq:HJB}
\frac{\pa u}{\pa t}(x,t) + H\big(x,t,\nabla u(x,t)\big) = 0, \qquad \quad (x,t) \in U \times (0,\infty),
\end{equation}
where the Hamiltonian $H$ is defined by
\begin{equation}\label{eq:defnHamiltonian}
H(x,t,p) = \sup_{(a,b) \in \R \times [0,\infty)}\big\{f(x,t,a,b)\cdot p - \gamma(t,a,b)\big\}.
\end{equation}

Recall the function $f$ given by \eqref{eq:defnf} (and \eqref{eq:defnfx2neg}). Since the functions $c$ and $\eta$ are locally bounded, one can show that for every $T > 0$ there exists a constant $L_f > 0$ such that for all $x,z \in U$, $t \in [0,T]$ and $(a,b) \in \R \times [0,\infty)$, one has
\begin{equation}\label{eq:boundonf}
\big|f(x,t,a,b)\big| \leq L_f\big(1 + |a| + |a||x| + b|x| + b|x|^2\big)
\end{equation}
and
\begin{equation}\label{eq:loclipboundonf}
\big|f(x,t,a,b) - f(z,t,a,b)\big| \leq L_f\big(|a| + b + b|x| + b|z|\big)|x-z|.
\end{equation}
In particular we note that $f$ has linear growth in $a$ and $b$, locally uniformly in $x$ and $t$. That is, for any compact $K \subset U$ and any $T > 0$, there exists a constant $\tilde{L}_f > 0$ such that
\begin{equation}\label{eq:flingrowth}
\big|f(x,t,a,b)\big| \leq \tilde{L}_f\big(1 + |a| + b\big) \qquad \text{for all} \quad x \in K,\, t \in [0,T],\, (a,b) \in \R \times [0,\infty).
\end{equation}

Henceforth we shall always assume that the function $c$ is continuous. We also make the following assumptions on the functions $\gamma$ and $v_0$.

\begin{assumption}\label{assumptiongamma}
We suppose that the running cost function $\gamma \colon (0,\infty) \times \R \times (0,\infty) \to \R$ is continuous, nonnegative, and satisfies the following coercivity condition. We suppose that there exists $p > 1$ such that for any $T > 0$,
\begin{equation}\label{eq:gammasuperlin}
\inf_{0 \leq t \leq T}\frac{\gamma(t,a,b)}{|a|^p + b^p} \longrightarrow \infty \qquad \text{as} \qquad |a| + b \longrightarrow \infty.
\end{equation}
\end{assumption}
It does not matter, at this stage, whether $\gamma$ is also defined and finite for $b = 0$, or whether $\gamma$ approaches $+\infty$ as $b \to 0^+$.

\begin{definition}\label{defncV}
We shall denote by $\cV$ the class of functions $u \colon \R^2 \to \R \cup \{\infty\}$ which are finite-valued and bounded below in $U$, and satisfy
\begin{gather*}
u(x) \longrightarrow \infty \qquad \text{as} \qquad |x| \longrightarrow \infty\\
\text{and} \quad u(x) = u(x_1,x_2) \longrightarrow \infty \quad \text{as} \quad x_2 \longrightarrow 0^+, \quad \text{uniformly in}\ x_1.
\end{gather*}
\end{definition}

\begin{assumption}\label{assumptionv0}
As in the previous section, we assume that the initial cost function $v_0 \colon \R^2 \to \R \cup \{\infty\}$ takes the value $\infty$ on $\R^2 \setminus U$. We also assume that $v_0$ belongs to the class $\cV$, and is locally Lipschitz continuous on $U$.
\end{assumption}

\begin{lemma}\label{lemmavfinitevalued}
The value function $v$ is finite-valued and locally bounded on $U$.
\end{lemma}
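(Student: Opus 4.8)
The plan is to establish the two bounds separately: an upper bound on $v(x,t)$ coming from the existence of at least one admissible control keeping the trajectory inside $U$, and a lower bound (local boundedness from below) following from the nonnegativity of $\gamma$ together with the fact that $v_0$ is bounded below on $U$. The lower bound is essentially immediate: since $\gamma \geq 0$ by Assumption~\ref{assumptiongamma} and $v_0$ is bounded below on $U$ by Assumption~\ref{assumptionv0}, for \emph{any} control $(\alpha,\beta) \in \cA$ we have $J(x,t;\alpha,\beta) \geq \inf_{z \in U} v_0(z) > -\infty$ (trajectories leaving $U$ contribute $+\infty$ and are irrelevant), so $v(x,t) \geq \inf_U v_0$, which is a constant independent of $x$ and $t$. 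Hence $v > -\infty$ and is bounded below locally (indeed globally on $U$).

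The substance is the upper bound: I must exhibit, for each fixed $(x,t) \in U \times (0,\infty)$, at least one control $(\alpha,\beta) \in \cA$ whose backward trajectory $w(\cdot\,;x,t,\alpha,\beta)$ from the terminal condition $w(t) = x$ stays in $U$ on $[0,t]$ and reaches an initial point $w(0) \in U$, so that $J(x,t;\alpha,\beta) < \infty$. The natural candidate is a constant control, say $\alpha \equiv 0$ and $\beta \equiv \beta_0$ for a suitable constant $\beta_0 \geq 0$; then $w_2$ solves $\dot w_2 = -\beta_0 w_2^2 + c_t^2$ and $w_1$ solves a linear (in $w_1$) equation driven by $\eta$ and $w_2$. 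First I would analyse the scalar equation for $w_2$ backward in time from $w_2(t) = x_2 > 0$: one needs to check that $w_2$ does not hit $0$ and does not blow up on $[0,t]$. Since $c$ is continuous hence bounded on $[0,t]$, say $c_s^2 \leq M$, the choice $\beta_0 = 0$ already gives $\dot w_2 = c_s^2 \geq 0$, so running backward $w_2$ is nonincreasing in $s$ decreasing from $x_2$; it stays positive only if $x_2 - \int_s^t c_r^2\,\rd r > 0$, which may fail. A cleaner choice keeps $w_2$ bounded away from $0$: take $\beta_0 > 0$ large enough that the equilibrium level $\sqrt{c_s^2/\beta_0}$ can be made small, and note that backward from $x_2$ the solution is monotone and confined between $x_2$ and the (time-varying) equilibrium, hence stays in a compact subinterval of $(0,\infty)$ — no blow-up, no exit. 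With $w_2$ so controlled on $[0,t]$, the equation $\dot w_1 = -(w_1 + \eta_s)\beta_0 w_2(s)$ is linear in $w_1$ with bounded coefficients (using local boundedness of $\eta$), so $w_1$ exists on all of $[0,t]$ and $w(0) \in U$. Then $J(x,t;\alpha,\beta) = \int_0^t \gamma(s,0,\beta_0)\,\rd s + v_0(w(0)) < \infty$ since $\gamma$ is continuous (hence bounded on $[0,t]\times\{0\}\times\{\beta_0\}$) and $v_0$ is finite on $U$. This gives $v(x,t) < \infty$.

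Finally, for local boundedness \emph{above} (not just finiteness pointwise), I would check that the construction above is locally uniform: for $(x,t)$ ranging over a compact subset of $U \times (0,\infty)$, one can choose a single $\beta_0$ working for all of them (the bounds on $x_2$ from above and below, on $t$, and on $\sup|c|$, $\sup|\eta|$ over the relevant time interval are all uniform on a compact set), and the resulting $w(0)$ stays in a compact subset of $U$, on which $v_0$ is bounded (being locally Lipschitz, hence continuous, on $U$ by Assumption~\ref{assumptionv0}); likewise $\int_0^t \gamma(s,0,\beta_0)\,\rd s$ is bounded. Combining with the global lower bound $\inf_U v_0$ yields that $v$ is finite-valued and locally bounded on $U$.

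I expect the main obstacle to be the backward analysis of $w_2$: one must argue carefully that a suitable constant control prevents both exit through $x_2 = 0$ and finite-time blow-up simultaneously, and that the required constant can be taken uniform over compacta — the Example in the text shows this is a genuine issue if the control is chosen carelessly. The monotonicity/confinement argument for the Riccati-type scalar ODE $\dot w_2 = c_s^2 - \beta_0 w_2^2$ run backward, comparing against the autonomous equation with $c_s^2$ replaced by its sup and inf, is the crux; everything else (linear ODE for $w_1$, continuity of $\gamma$, finiteness of $v_0$ on $U$) is routine.
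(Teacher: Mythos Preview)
Your overall strategy --- lower bound from nonnegativity of $\gamma$ and boundedness below of $v_0$, upper bound by exhibiting one admissible control, then checking uniformity over compacta --- is exactly right and matches the paper. The gap is in your specific choice of control. With $\alpha \equiv 0$ and $\beta \equiv \beta_0$, the backward equation for $\tilde w_2(\tau) := w_2(t-\tau)$ reads $\dot{\tilde w}_2 = \beta_0 \tilde w_2^2 - c_{t-\tau}^2$, for which the positive equilibrium is \emph{repelling}: the backward trajectory moves \emph{away} from it, not toward it. If $x_2$ lies above the equilibrium the solution blows up in finite backward time; if $x_2$ lies below, it crosses zero in finite backward time. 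This is precisely the pathology the paper's Example exhibits (with $c\equiv 2$, $\beta_0=1$, and $x_2=3$ or $x_2=1$). Your confinement claim (``monotone and confined between $x_2$ and the equilibrium'') has the stability direction reversed, so no constant $\beta_0$ with $\alpha\equiv 0$ works in general.

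The paper sidesteps the Riccati instability not by tuning $\beta$ but by using $\alpha$: it takes $\overline\beta_s \equiv 1$ and $\overline\alpha_s = (c_s^2 - x_2^2)/(2x_2)$, chosen precisely so that the right-hand side of \eqref{eq:w2ODE} vanishes identically at $w_2 = x_2$. Hence $w_2(s) \equiv x_2$ for all $s \in [0,t]$, and the equation for $w_1$ becomes linear with bounded coefficients, giving $w(0) \in U$. This control depends on $x_2$, but since $x_2$ is bounded away from $0$ on any compact $K \subset U$, the dependence is uniform there, and your argument for local boundedness above then goes through verbatim.
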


\begin{proof}
For a given point $(x,t) = (x_1,x_2,t) \in U \times [0,\infty)$, consider the control $(\overline{\alpha},\overline{\beta})$ given by
$$\overline{\alpha}_s = \frac{c_s^2 - x_2^2}{2x_2} \qquad \text{and} \qquad \overline{\beta}_s = 1 \qquad \text{for} \quad\ s \geq 0.$$
Since $c$ is locally bounded and $\gamma$ is continuous, we see that $\int_0^t\gamma(s,\overline{\alpha}_s,\overline{\beta}_s)\,\rd s < \infty$.

Recall the ODE \eqref{eq:controlODE} satisfied by the state trajectories, given in this case by
$$\frac{\rd w_1}{\rd s}(s) = -\big(w_1(s) + \eta_s\big)\frac{c_s^2 + x_2^2}{2x_2} \qquad \text{and} \qquad \frac{\rd w_2}{\rd s}(s) = 0.$$
The solution of the first equation above has at most exponential growth, and in particular has a finite speed of propagation. It is therefore clear that the initial point $w(0)$ corresponding to this control lies in $U$. Since $v_0$ is finite-valued in $U$, we see that $J(x,t;\overline{\alpha},\overline{\beta}) < \infty$, and hence that $v(x,t) < \infty$.

Let $T > 0$ and let $K$ be a compact subset of $U$. Note that for $x \in K$, the value of $x_2$ is bounded away from zero, so that the control $(\overline{\alpha},\overline{\beta})$ defined above (which depends on the choice of $x$) is uniformly bounded on the compact time interval $[0,T]$. It is also clear that the initial point $w(0) = w(0;x,t,\overline{\alpha},\overline{\beta})$ is bounded in absolute value, and bounded away from the boundary of $U$ at $x_2 = 0$, uniformly for $x \in K$ and $t \in [0,T]$. Since $v_0$ is assumed to be continuous and hence locally bounded, we deduce that $\sup_{x \in K,\, t \in [0,T]}J(x,t;\overline{\alpha},\overline{\beta}) < \infty$, and hence that $v$ is bounded on $K \times [0,T]$.
\end{proof}

Although we are primarily interested in the value function $v$, it is also worth considering the `approximate value function', given by taking the infimum of the cost functional $J$ over a class of uniformly bounded controls. More precisely, for a given constant $M > 0$, we define
$$\cA^M = \big\{(\alpha,\beta) \in \cA\hspace{2pt} :\hspace{2pt} |\alpha_s| + \beta_s \leq M\hspace{5pt} \ \text{for all}\ \ s \geq 0\big\}$$
to be the set of controls which are bounded by $M$, and let
\begin{equation}\label{eq:defnvM}
v^M(x,t) = \inf_{(\alpha,\beta) \in \cA^M}J(x,t;\alpha,\beta) \qquad \text{for} \quad\ x \in U,\ t \geq 0.
\end{equation}

Although parts of our analysis become easier when we restrict to uniformly bounded controls, one should note that the approximate value function defined above is in some ways more awkward to deal with, as it is not finite-valued in the whole of $U$.

Recall the control $(\overline{\alpha},\overline{\beta})$ from the proof of Lemma~\ref{lemmavfinitevalued}. Note that $\overline{\alpha}_s \to +\infty$ as $x_2 \to 0^+$ (whenever $c_s \neq 0$). This is an indication of the fact that, as the terminal value $x$ approaches the boundary of $U$ at $x_2 = 0$, we require larger and larger controls to ensure that the corresponding state trajectory remains inside $U$. This can seen by noticing that for small values of $w_2(t)$, the dominant term in the ODE \eqref{eq:w2ODE} is $c_t^2$, which pushes the initial value of the solution towards (and indeed beyond) the boundary of $U$. Recalling that $x_2 = 1/\sigma^2$, we note that this behaviour corresponds to extremely large values of the conditional variance being considered to be very unreasonable.

Thus, for any $M > 0$, the approximate value function $v^M$ is infinite-valued in a nontrivial (and time-dependant) subset of the domain $U$. In the following we denote
$$Q := U \times (0,\infty)$$
and write $Q^M$ for the part of the domain where $v^M$ is finite-valued, that is
\begin{align*}
Q^M &:= \big\{(x,t) \in Q\ :\ v^M(x,t) < \infty\big\}\\
&= \big\{(x,t) \in Q\ :\ \exists (\alpha,\beta) \in \cA^M\ \ \text{such that}\ \ w(0;x,t,\alpha,\beta) \in U\big\}.
\end{align*}
Since $v_0 \in \cV$, it can be seen that $v^M(x,t)$ tends to infinity as $(x,t)$ approaches the boundary of the set $Q \setminus Q^M$.

For a fixed time $T > 0$, we will also write
$$Q_T := U \times (0,T) \qquad \text{and} \qquad Q^M_T := Q^M \cap Q_T.$$

\begin{remark}\label{remarkKTQMT}
It is not hard to see that $Q^M$ is an open subset of $Q$. Further, by inspecting the proof of Lemma~\ref{lemmavfinitevalued}, we can deduce that, for any $T > 0$ and any compact $K \subset U$, we will have that $K \times (0,T) \subset Q^M_T$ for all sufficiently large $M > 0$, so that in particular $\bigcup_{M > 0}Q^M_T = Q_T$. Since any given control $(\alpha,\beta) \in \cA$ is bounded on the compact time interval $[0,T]$, we also have that $v(x,t) = \inf_{M > 0}v^M(x,t)$ for all $(x,t) \in Q_T$.
\end{remark}

When considering uniformly bounded controls, it is natural to also consider the modified HJB equation:
\begin{equation}\label{eq:HJBM}
\frac{\pa u}{\pa t}(x,t) + H_M\big(x,t,\nabla u(x,t)\big) = 0,
\end{equation}
where $H_M$ is the modified Hamiltonian, given by
\begin{equation}\label{eq:defnHM}
H_M(x,t,p) = \sup\bigg\{f(x,t,a,b)\cdot p - \gamma(t,a,b)\ \bigg| \ \genfrac{}{}{0pt}{}{(a,b) \in \R \times [0,\infty)}{\text{such that} \ \ |a| + b \leq M}\bigg\}.
\end{equation}

\begin{lemma}
The Hamiltonian $H$ and modified Hamiltonian $H_M$ are continuous.
\end{lemma}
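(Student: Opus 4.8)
The plan is to handle the modified Hamiltonian $H_M$ first, where the supremum is taken over a \emph{fixed} compact set of controls, and then to deduce continuity of $H$ from that of the family $(H_M)_{M>0}$ by showing that, locally in $(x,t,p)$, the unconstrained supremum in \eqref{eq:defnHamiltonian} is already attained by controls lying in a bounded set. The coercivity built into Assumption~\ref{assumptiongamma} is exactly what makes this reduction possible.

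For $H_M$: I would first note that the integrand $g(x,t,p,a,b):=f(x,t,a,b)\cdot p-\gamma(t,a,b)$ is jointly continuous wherever $b>0$. Indeed, under the standing hypothesis that $c$ is continuous, the two defining formulae \eqref{eq:defnf} and \eqref{eq:defnfx2neg} for $f$ agree on $\{z_2=0\}$ and are polynomial in $(z_1,z_2,a,b)$ with continuous $t$-dependent coefficients, so $f$ is jointly continuous, while $\gamma$ is continuous by Assumption~\ref{assumptiongamma}. The constraint set $K_M:=\{(a,b)\in\R\times[0,\infty):|a|+b\le M\}$ is compact and independent of $(x,t,p)$, so the continuity of $H_M(x,t,p)=\sup_{(a,b)\in K_M}g(x,t,p,a,b)$ is the standard fact that the supremum of a jointly continuous function over a fixed compact set is a continuous function of the remaining parameters — via uniform continuity of $g$ on a compact neighbourhood together with the elementary bound $|\sup h_1-\sup h_2|\le\sup|h_1-h_2|$. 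The only point needing care is the face $\{b=0\}$, where $\gamma$ may fail to be finite: one extends $\gamma$ there to a lower semicontinuous function (the value $+\infty$ being permitted), after which $H_M$ is simultaneously lower semicontinuous, as a supremum of lower semicontinuous functions, and upper semicontinuous, as a supremum over the fixed compact $K_M$ of the upper semicontinuous function $g$, hence continuous. This part is routine.

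For $H$: I would fix a point $(x_0,t_0,p_0)\in U\times(0,\infty)\times\R^2$ and a compact neighbourhood of the form $K\times[\tau,T]\times\overline{B}(0,P)$ with $K\subset U$ compact and $0<\tau<t_0<T$. From the growth estimate \eqref{eq:boundonf} (equivalently \eqref{eq:flingrowth}) there is a constant $C_1=C_1(K,T,P)$ with $f(x,t,a,b)\cdot p\le|f(x,t,a,b)|\,|p|\le C_1(1+|a|+b)$ throughout this neighbourhood. Since the exponent in the coercivity condition \eqref{eq:gammasuperlin} exceeds $1$, that condition forces $\gamma(t,a,b)-C_1(1+|a|+b)\to\infty$ as $|a|+b\to\infty$, uniformly for $t\in[\tau,T]$; and $g(\cdot,\cdot,\cdot,0,1)$ is bounded below on the neighbourhood because $\gamma(\cdot,0,1)$ is bounded on $[\tau,T]$ by continuity. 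Hence there is $M_0\ge 1$, depending only on the neighbourhood, such that $g(x,t,p,a,b)<g(x,t,p,0,1)$ whenever $|a|+b>M_0$, for every $(x,t,p)$ in the neighbourhood. Consequently the supremum defining $H(x,t,p)$ is unchanged on restricting the controls to $K_{M_0}$, so $H=H_M$ on the neighbourhood for every $M\ge M_0$. In particular $H$ is finite there and, by the first part, continuous there; as $(x_0,t_0,p_0)$ was arbitrary, $H$ is continuous.

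The step I expect to be the main obstacle is the coercivity argument in the last paragraph: one must combine the at-most-linear growth of $f\cdot p$ in the control variables $(a,b)$, locally uniform in $(x,t)$, with the superlinear lower bound on $\gamma$ furnished by \eqref{eq:gammasuperlin}, and — crucially — obtain this uniformly in $t$ over compact time intervals, which is precisely the form in which the coercivity condition is stated. The remaining ingredients are bookkeeping, and the mild technical nuisance at $b=0$ causes no real difficulty.
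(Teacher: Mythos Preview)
Your proposal is correct and follows essentially the same approach as the paper: continuity of $H_M$ from the compactness of the control set, then local coincidence $H=H_{M_0}$ via the linear growth \eqref{eq:flingrowth} of $f$ against the superlinear coercivity \eqref{eq:gammasuperlin} of $\gamma$. Your treatment is in fact more careful than the paper's on two points---the possible blow-up of $\gamma$ at $b=0$ and the use of a concrete comparison control $(0,1)$---but the underlying argument is the same.
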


\begin{proof}
As the functions $f$ and $\gamma$ are continuous, and the supremum in \eqref{eq:defnHM} is taken over a compact set, it is straightforward to see that $H_M$ is continuous for any given $M$.

Let $T,R > 0$, let $K$ be a compact subset of $U$, and write $\overline{B}_R(0)$ for the closed ball in $\R^2$, centred at $0$ with radius $R$. It follows from the conditions \eqref{eq:flingrowth} and \eqref{eq:gammasuperlin} that
\begin{align*}
\sup_{\genfrac{}{}{0pt}{}{x \in K,\ 0 \leq t \leq T}{p \in \overline{B}_R(0)}}&\big\{f(x,t,a,b)\cdot p - \gamma(t,a,b)\big\}\\
&\leq \big(1 + |a| + b\big)\bigg(\tilde{L}_fR - \inf_{0 \leq t \leq T}\frac{\gamma(t,a,b)}{1 + |a| + b}\bigg)\, \longrightarrow\, -\infty
\end{align*}
as $|a| + b \to \infty$. We therefore deduce the existence of an $M > 0$ such that $H(x,t,p) = H_M(x,t,p)$ for all $(x,t,p) \in K \times [0,T] \times \overline{B}_R(0)$. Continuity of $H$ then clearly follows from the continuity of $H_M$.
\end{proof}

The value function $v$ and approximate value function $v^M$ satisfy the following Dynamic Programming Principle (DPP).

\begin{proposition}\label{DPP}
For any $x,t$ and $0 \leq r \leq t$, we have
$$v(x,t) = \inf_{(\alpha,\beta) \in \cA}\bigg\{\int_r^t\gamma(s,\alpha_s,\beta_s)\,\rd s + v(w(r),r)\bigg\},$$
where $w(\hspace{1pt}\cdot\hspace{1pt}) = w(\,\cdot\,;x,t,\alpha,\beta)$.
\end{proposition}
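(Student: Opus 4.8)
The plan is to prove the two inequalities separately, by the standard dynamic programming argument, being careful about the fact that state trajectories may leave $U$ or blow up in finite time. The key technical tool throughout is uniqueness of solutions to the state ODE \eqref{eq:controlODE}: since any fixed control $(\alpha,\beta) \in \cA$ is locally bounded, the bound \eqref{eq:loclipboundonf} (together with the Lipschitz continuity of the extension \eqref{eq:defnfx2neg} on $\R^2 \setminus U$) shows that $s \mapsto f(\cdot,s,\alpha_s,\beta_s)$ is locally Lipschitz in the state variable with a locally bounded Lipschitz constant, so that the backward ODE \eqref{eq:controlODE}--\eqref{eq:terminalcondx} has a unique maximal solution. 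Consequently trajectories may be glued and restricted in the natural way: if $w(\cdot) = w(\cdot\,;x,t,\alpha,\beta)$, $0 \leq r \leq t$ and $w(r) \in U$, then on $[0,r]$ the function $w$ coincides with $w(\cdot\,;w(r),r,\alpha,\beta)$; and if $(\alpha',\beta') \in \cA$ agrees with $(\alpha,\beta)$ on $[r,\infty)$, then $w(\cdot\,;x,t,\alpha',\beta')$ agrees with $w(\cdot)$ on $[r,t]$.

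For the inequality $v(x,t) \geq \inf_{(\alpha,\beta)}\{\int_r^t \gamma\,\rd s + v(w(r),r)\}$, fix $(\alpha,\beta) \in \cA$ and write $w(\cdot) = w(\cdot\,;x,t,\alpha,\beta)$. If the trajectory blows up or leaves $U$ somewhere in $(r,t]$ then the corresponding term on the right is $+\infty$ by convention and may be ignored, so assume $w(r) \in U$. By the restriction property, $(\alpha,\beta)$ restricted to $[0,r]$ is admissible for the problem with terminal data $(w(r),r)$ and yields the same initial point $w(0)$, whence $J(x,t;\alpha,\beta) = \int_r^t \gamma(s,\alpha_s,\beta_s)\,\rd s + J(w(r),r;\alpha,\beta) \geq \int_r^t \gamma(s,\alpha_s,\beta_s)\,\rd s + v(w(r),r)$. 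Taking the infimum over $(\alpha,\beta)$ — the right-hand infimum being over a nonempty set, since the control $(\overline{\alpha},\overline{\beta})$ from the proof of Lemma~\ref{lemmavfinitevalued} keeps $w_2$ constant and so reaches time $r$ inside $U$ — gives the claim.

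For the reverse inequality, fix $(\alpha,\beta) \in \cA$ and $\epsilon > 0$; again we may assume $w(r) =: y \in U$, as otherwise the right-hand term is $+\infty$. By Lemma~\ref{lemmavfinitevalued}, $v(y,r) < \infty$, so there is $(\alpha',\beta') \in \cA$ with $J(y,r;\alpha',\beta') \leq v(y,r) + \epsilon < \infty$; in particular the trajectory $w(\cdot\,;y,r,\alpha',\beta')$ exists on $[0,r]$ and stays in $U$. Concatenate the controls by setting $(\hat\alpha,\hat\beta) := (\alpha',\beta')$ on $[0,r)$ and $:= (\alpha,\beta)$ on $[r,\infty)$; this lies in $\cA$. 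By the gluing property, $w(\cdot\,;x,t,\hat\alpha,\hat\beta)$ equals $w(\cdot)$ on $[r,t]$, hence passes through $y$ at time $r$, and then equals $w(\cdot\,;y,r,\alpha',\beta')$ on $[0,r]$, so its initial point is that of the latter trajectory and $J(x,t;\hat\alpha,\hat\beta) = \int_r^t \gamma(s,\alpha_s,\beta_s)\,\rd s + J(y,r;\alpha',\beta') \leq \int_r^t \gamma(s,\alpha_s,\beta_s)\,\rd s + v(y,r) + \epsilon$. Thus $v(x,t) \leq \int_r^t \gamma(s,\alpha_s,\beta_s)\,\rd s + v(w(r),r) + \epsilon$; taking the infimum over $(\alpha,\beta)$ and letting $\epsilon \downarrow 0$ finishes the proof.

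I expect the main obstacle to be not any single step but the bookkeeping around trajectories that explode or exit $U$ before time $r$: these must be assigned the value $+\infty$ consistently on both sides (in accordance with $\inf\emptyset = \infty$, $v_0 \equiv \infty$ off $U$, and extending $v(\cdot,r) \equiv \infty$ off $U$), and one must check that they never spoil either infimum because the admissible set always contains the well-behaved control from Lemma~\ref{lemmavfinitevalued}. Beyond this, the argument is the routine splitting $\int_0^t = \int_0^r + \int_r^t$ combined with uniqueness of solutions of the state ODE, so no deeper difficulty is anticipated.
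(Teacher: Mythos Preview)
Your argument is correct and is precisely the standard dynamic programming splitting that the paper has in mind: the paper does not actually write out a proof of Proposition~\ref{DPP} but simply declares it standard and refers to Yong and Zhou~\cite{YongZhou1999}. Your careful handling of the blow-up/exit cases is appropriate here (and is in fact simplified by the observation that, with the extension~\eqref{eq:defnfx2neg}, once $w_2$ hits zero going backward it can never return to $U$, so $J(x,t;\alpha,\beta)<\infty$ already forces $w(r)\in U$), so there is no gap.
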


The statement of the DPP for the approximate value function $v^M$ is identical to the above, except that one only considers controls $(\alpha,\beta) \in \cA^M$. The proof of this result is standard; see e.g.~Yong and Zhou \cite{YongZhou1999}.

As noted above, the approximate value function $v^M$ is only finite-valued in the subdomain $Q^M$. However, we still wish to consider $v^M$ as being a supersolution of the HJB equation. We therefore consider viscosity supersolutions in the following sense.

\begin{definition}\label{Defninfinitesupersolns}
We say that a function $\hu \colon Q \to \R \cup \{\infty\}$ is a \emph{viscosity supersolution} of a given equation if the set $Q^{\hu} := \{(x,t) \in Q\, :\, \hu(x,t) < \infty\}$ is open, $\hu$ satisfies the usual definition of viscosity supersolution on $Q^{\hu}$, and $\hu$ is `continuous' in the sense that, as well as being continuous on $Q^{\hu}$, we also have that $\hu(x,t)$ tends to infinity as $(x,t)$ approaches the boundary of the set $Q \setminus Q^{\hu}$.
\end{definition}

We can use Proposition~\ref{DPP} to obtain the following result, whose proof is adapted from \cite{YongZhou1999}.

\begin{proposition}\label{valuefuncvissolnHJB}
The value function $v$ is a viscosity solution of the HJB equation \eqref{eq:HJB}, and the approximate value function $v^M$ is a viscosity supersolution of the modified HJB equation \eqref{eq:HJBM}.
\end{proposition}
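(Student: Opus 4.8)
The plan is to obtain the subsolution and supersolution inequalities separately from the Dynamic Programming Principle (Proposition~\ref{DPP}) by the classical test-function argument (cf.\ \cite{YongZhou1999}), adapted to the ``backward reachability'' structure here: $t$ is the length of the time horizon and $x$ the terminal state, so freezing $w(t)=x$ and perturbing on the short interval $[t-h,t]$, $h>0$, produces after a first-order expansion of the test function along the trajectory and division by $h$ exactly the Hamiltonian $H$ of \eqref{eq:defnHamiltonian}. We use throughout that $v$ is finite and locally bounded on $Q$ (Lemma~\ref{lemmavfinitevalued}), that $v_0$ (hence $v$, since $\gamma\geq0$) is bounded below, that $H$ is continuous, and the local growth bounds \eqref{eq:flingrowth} on $f$; continuity of $v$, needed to make ``viscosity solution'' meaningful, is supplied in Section~\ref{SubsectionRegularity}.

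\emph{Subsolution inequality.} Let $\phi\in C^1$ with $v-\phi$ having a local maximum at $(x_0,t_0)\in U\times(0,\infty)$; after adding a constant we may assume $v(x_0,t_0)=\phi(x_0,t_0)$ and $v\leq\phi$ nearby. Fix any constant control $(a,b)\in\R\times[0,\infty)$ and let $w(\cdot)=w(\cdot\,;x_0,t_0,a,b)$ solve \eqref{eq:controlODE} with $w(t_0)=x_0$; since $(a,b)$ is bounded and $x_0$ is interior to $U$, for $h$ small $w$ exists on $[t_0-h,t_0]$ and remains in a fixed compact neighbourhood of $x_0$ in $U$. The DPP applied to this single control gives $\phi(x_0,t_0)=v(x_0,t_0)\leq\int_{t_0-h}^{t_0}\gamma(s,a,b)\,\rd s+v(w(t_0-h),t_0-h)\leq\int_{t_0-h}^{t_0}\gamma(s,a,b)\,\rd s+\phi(w(t_0-h),t_0-h)$. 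Writing the difference $\phi(x_0,t_0)-\phi(w(t_0-h),t_0-h)=\int_{t_0-h}^{t_0}\big(\pa_t\phi+f(\cdot,\cdot,a,b)\cdot\nabla\phi\big)(w(s),s)\,\rd s$, rearranging, dividing by $h$ and sending $h\to0^+$ (the integrands are continuous and $w(s)\to x_0$) gives $\pa_t\phi(x_0,t_0)+f(x_0,t_0,a,b)\cdot\nabla\phi(x_0,t_0)-\gamma(t_0,a,b)\leq0$; taking the supremum over $(a,b)$ yields $\pa_t\phi(x_0,t_0)+H(x_0,t_0,\nabla\phi(x_0,t_0))\leq0$.

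\emph{Supersolution inequality.} Now let $v-\phi$ have a local minimum at $(x_0,t_0)$, again normalised so that $v(x_0,t_0)=\phi(x_0,t_0)$, $v\geq\phi$ nearby. For $\varepsilon\in(0,1]$ and $h>0$ pick an $\varepsilon h$-optimal control $(\alpha^h,\beta^h)\in\cA$ for $v(x_0,t_0)$ and let $w^h=w(\cdot\,;x_0,t_0,\alpha^h,\beta^h)$. The main obstacle is to show the localisation $\sup_{s\in[t_0-h,t_0]}|w^h(s)-x_0|\to0$ as $h\to0^+$ uniformly in $\varepsilon$, which is nontrivial since the controls in a minimising sequence need not be uniformly bounded. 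For this, $\varepsilon h$-optimality together with $\gamma\geq0$ and $v_0\geq\inf_U v_0$ gives $\int_{t_0-h}^{t_0}\gamma(s,\alpha^h_s,\beta^h_s)\,\rd s\leq v(x_0,t_0)+\varepsilon h-\inf_U v_0=:C$, independent of $h\leq h_0$ and $\varepsilon\leq1$; then the coercivity \eqref{eq:gammasuperlin} lets one split $[t_0-h,t_0]$ into the set where $|\alpha^h_s|+\beta^h_s$ is large — on which $|\alpha^h_s|+\beta^h_s\leq c\,\gamma(s,\alpha^h_s,\beta^h_s)^{1/p}$ for some constant $c$, so by Hölder its contribution to $\int_{t_0-h}^{t_0}(1+|\alpha^h_s|+\beta^h_s)\,\rd s$ is at most $c\,C^{1/p}h^{1-1/p}$ — and the complement, on which the control is bounded, so that $\int_{t_0-h}^{t_0}(1+|\alpha^h_s|+\beta^h_s)\,\rd s\to0$. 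Combined with the local linear growth \eqref{eq:flingrowth} of $f$ in $(a,b)$ and a continuation argument keeping $w^h$ in a compact neighbourhood of $x_0$, this gives the localisation (and in particular $w^h(t_0-h)\in U$ for $h$ small). Then the DPP gives $\phi(x_0,t_0)+\varepsilon h\geq\int_{t_0-h}^{t_0}\gamma(s,\alpha^h_s,\beta^h_s)\,\rd s+\phi(w^h(t_0-h),t_0-h)$; writing $\phi(x_0,t_0)-\phi(w^h(t_0-h),t_0-h)=\int_{t_0-h}^{t_0}\big(\pa_t\phi+f(\cdot,\cdot,\alpha^h_s,\beta^h_s)\cdot\nabla\phi\big)(w^h(s),s)\,\rd s$ and using $f(z,s,a,b)\cdot p-\gamma(s,a,b)\leq H(z,s,p)$ pointwise yields $\frac1h\int_{t_0-h}^{t_0}\big(\pa_t\phi+H(\cdot,\cdot,\nabla\phi)\big)(w^h(s),s)\,\rd s+\varepsilon\geq0$; letting $h\to0^+$ (by the localisation and continuity of $\pa_t\phi$, $\nabla\phi$, $H$) and then $\varepsilon\to0$ gives $\pa_t\phi(x_0,t_0)+H(x_0,t_0,\nabla\phi(x_0,t_0))\geq0$. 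Hence $v$ is a viscosity solution of \eqref{eq:HJB}.

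\emph{The approximate value function.} For $v^M$ we only claim the supersolution property, on the open set $Q^M$ (Remark~\ref{remarkKTQMT}) where $v^M$ is finite and continuous. We repeat the supersolution argument with controls drawn from $\cA^M$ and with $H_M$ of \eqref{eq:defnHM} in place of $H$: the $\varepsilon h$-optimal controls now satisfy $|\alpha^h_s|+\beta^h_s\leq M$, so by \eqref{eq:flingrowth} the localisation is immediate ($\sup_{s\in[t_0-h,t_0]}|w^h(s)-x_0|\leq\tilde{L}_f(1+M)h\to0$, with no appeal to coercivity), and since $Q^M$ is open these trajectories lie in $Q^M$ — where $v^M$ is finite and $\geq\phi$ — for $h$ small; the same computation then gives $\pa_t\phi+H_M(x_0,t_0,\nabla\phi)\geq0$ at every interior minimum point of $v^M-\phi$ in $Q^M$. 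As noted after the definition of $Q^M$, $v^M(x,t)\to\infty$ as $(x,t)$ approaches the boundary of $Q\setminus Q^M$, so $v^M$ is a viscosity supersolution of \eqref{eq:HJBM} in the sense of Definition~\ref{Defninfinitesupersolns}. The only genuinely delicate point in the whole proof is the near-optimal trajectory localisation for $v$, which is precisely where the unbounded-control case needs the coercivity Assumption~\ref{assumptiongamma}.
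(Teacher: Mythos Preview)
Your argument is correct, and for the subsolution part and for the $v^M$ supersolution it coincides with the paper's. The genuine divergence is in how you obtain the supersolution inequality for $v$ itself.

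The paper does \emph{not} prove the supersolution property of $v$ directly. Instead it first proves that each $v^M$ is a viscosity supersolution of the modified equation \eqref{eq:HJBM} on $Q^M_T$ (where localisation of the near-optimal trajectories is immediate since $|\alpha^h_s|+\beta^h_s\le M$), then observes that $H_M\le H$ so $v^M$ is also a supersolution of \eqref{eq:HJB} on $Q^M_T$, and finally invokes $v=\inf_{M>0}v^M$ and $\bigcup_M Q^M_T=Q_T$ (Remark~\ref{remarkKTQMT}) together with stability of viscosity supersolutions under decreasing limits to conclude that $v$ is a supersolution on $Q_T$. Your route is direct: you take $\varepsilon h$-optimal controls in the full class $\cA$ and establish the trajectory localisation $\sup_{s\in[t_0-h,t_0]}|w^h(s)-x_0|\to 0$ via the coercivity of $\gamma$ (Assumption~\ref{assumptiongamma}) and a H\"older/continuation argument, bypassing $v^M$ and the stability step entirely.

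Both approaches ultimately rely on Assumption~\ref{assumptiongamma} (the paper through the continuity of $v$ and $v^M$ needed for stability, you through the explicit $L^1$ bound on the control). The paper's route keeps the localisation trivial at the cost of an extra stability lemma; yours is self-contained and isolates exactly where unbounded controls bite, at the cost of the H\"older estimate $\int_{t_0-h}^{t_0}(1+|\alpha^h_s|+\beta^h_s)\,\rd s\lesssim h+h^{1-1/p}$. Your observation that this estimate is the ``only genuinely delicate point'' is accurate and worth keeping.
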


\begin{proof}
We first note that $v$ is continuous on $Q$, and $v^M$ is continuous on $Q^M$, which follows from Theorem~\ref{valuefncloclip} below.

Suppose $\vp \in C^1(Q)$ is such that $v - \vp$ has a local maximum at a point $(x,t) \in Q$. Fix $(a,b) \in \R \times [0,\infty)$, and define a control $(\alpha,\beta) \in \cA$ by $(\alpha_t,\beta_t) \equiv (a,b)$, and let $w(\cdot) = w(\cdot\,;x,t,\alpha,\beta)$ be the corresponding state trajectory. Then, by the DPP (Proposition~\ref{DPP}), for $0 \leq r < t$ with $t - r$ sufficiently small, we have
\begin{align*}
0 &\leq \frac{v(x,t) - \vp(x,t) - v(w(r),r) + \vp(w(r),r)}{t - r}\\
&\leq \frac{1}{t - r}\bigg(\int_r^t\gamma(s,a,b)\,\rd s - \vp(x,t) + \vp(w(r),r)\bigg)\\
&\longrightarrow\, \gamma(t,a,b) - \frac{\pa\vp}{\pa t}(x,t) - f(x,t,a,b)\cdot\nabla\vp(x,t) \qquad \text{as} \quad r \to t.
\end{align*}
It follows that
$$\frac{\pa\vp}{\pa t}(x,t) + H\big(x,t,\nabla\vp(x,t)\big) \leq 0,$$
so $v$ is a viscosity subsolution of \eqref{eq:HJB}.

Fix $T > 0$ and suppose that $\vp \in C^1(Q^M_T)$ is such that $v^M - \vp$ has a local minimum at some point $(x,t) \in Q^M_T$. Let $\varepsilon > 0$. By the DPP, for any $0 \leq r < t$ with $t - r$ sufficiently small, we can find a control $(\alpha^r,\beta^r) \in \cA^M$ such that, writing $w(\cdot) = w(\cdot\,;x,t,\alpha^r,\beta^r)$, we have
\begin{align*}
0 &\geq v^M(x,t) - \vp(x,t) - v^M(w(r),r) + \vp(w(r),r) \\
&\geq -\varepsilon(t - r) + \int_r^t\gamma(s,\alpha^r_s,\beta^r_s)\,\rd s - \vp(x,t) + \vp(w(r),r).
\end{align*}
Rearranging, it follows that
\begin{align*}
-\varepsilon &\leq \frac{1}{t - r}\int_r^t\bigg(\frac{\pa\vp}{\pa t}(w(s),s) + f(w(s),s,\alpha^r_s,\beta^r_s)\cdot\nabla\vp(w(s),s) - \gamma(s,\alpha^r_s,\beta^r_s)\bigg)\rd s\\
&\leq \frac{1}{t - r}\int_r^t\bigg(\frac{\pa\vp}{\pa t}(w(s),s) + H_M\big(w(s),s,\nabla\vp(w(s),s)\big)\bigg)\rd s\\
&\longrightarrow\, \frac{\pa\vp}{\pa t}(x,t) + H_M\big(x,t,\nabla\vp(x,t)\big) \qquad \text{as} \quad r \to t.
\end{align*}
One should note that the function $w(\cdot)$ in the above depends on $r$, so care should be taken when taking the limit in the last line. The convergence is justified by the fact that the controls $\{(\alpha^r,\beta^r)\}_{0 \leq r < t}$ are uniformly bounded (by $M$), which implies that the functions $\{w(\cdot\,;x,t,\alpha^r,\beta^r)\}_{0 \leq r < t}$ are continuous at the point $(x,t)$, uniformly in $r$.

Taking $\varepsilon \to 0$, we deduce that $v^M$ is a viscosity supersolution of \eqref{eq:HJBM} in $Q^M_T$, and hence that it is a viscosity supersolution of \eqref{eq:HJBM} in $Q_T$, in the sense of Definition~\ref{Defninfinitesupersolns}.

Since $H_M \leq H$, we see that $v^M$ is also a viscosity supersolution of \eqref{eq:HJB} in $Q^M_T$. As noted in Remark~\ref{remarkKTQMT}, we have that $\bigcup_{M > 0}Q^M_T = Q_T$, and $v(x,t) = \inf_{M > 0}v^M(x,t)$ for all $(x,t) \in Q_T$, from which it follows that $v$ is also a viscosity supersolution of \eqref{eq:HJB} in $Q_T$. As $T > 0$ was arbitrary, we have the result.
\end{proof}

\subsection{Regularity}\label{SubsectionRegularity}

Recall Assumptions~\ref{assumptiongamma} and \ref{assumptionv0}, in particular the assumption that the initial cost function $v_0$ is locally Lipschitz continuous on $U$. Under these assumptions we have the following.

\begin{theorem}\label{valuefncloclip}
The value function $v$ is locally Lipschitz continuous.
\end{theorem}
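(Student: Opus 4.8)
The plan is to establish local Lipschitz continuity of $v$ by controlling how $v(x,t)$ changes under perturbations of the terminal point $x$ and the terminal time $t$ separately, working locally in a neighbourhood of an arbitrary point $(\ox,\ot) \in Q$. The key tool is the local Lipschitz dependence of the state trajectories on their terminal data, which follows from the growth and Lipschitz bounds \eqref{eq:boundonf} and \eqref{eq:loclipboundonf} on $f$ via a Gr\"onwall-type argument, combined with the coercivity of $\gamma$ from Assumption~\ref{assumptiongamma}, which lets us restrict attention to controls with a bounded running cost. First I would show that near-optimal controls can be taken to lie in a fixed bounded set $\cA^M$: since $\gamma$ is nonnegative and $v$ is locally bounded (Lemma~\ref{lemmavfinitevalued}), any control achieving cost within $1$ of $v(x,t)$ satisfies $\int_0^t\gamma(s,\alpha_s,\beta_s)\,\rd s \leq C$ on a neighbourhood of $(\ox,\ot)$, and then \eqref{eq:gammasuperlin} forces such controls to spend all but a bounded amount of time in a bounded region of $(a,b)$-space; by a standard bang-off truncation one shows the infimum is unchanged (up to $\varepsilon$) if we restrict to controls bounded by some $M = M(\ox,\ot)$.

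Next, fix such an $M$ and a compact neighbourhood $K \times [t_0,t_1] \subset Q$ of $(\ox,\ot)$. For two terminal points $x, x' \in K$ and a common control $(\alpha,\beta) \in \cA^M$, I would compare the trajectories $w(\cdot\,;x,t,\alpha,\beta)$ and $w(\cdot\,;x',t,\alpha,\beta)$. Using \eqref{eq:loclipboundonf} and the fact that bounded controls over a bounded time interval keep trajectories in a fixed compact subset of $U$ (shrinking $K$ and using that $v_0 \in \cV$ so that escaping trajectories are irrelevant), Gr\"onwall gives $|w(0;x,t,\alpha,\beta) - w(0;x',t,\alpha,\beta)| \leq C|x - x'|$ with $C$ depending only on $M$, $t_1$, and the compact region. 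Since $v_0$ is locally Lipschitz on $U$ (Assumption~\ref{assumptionv0}), this yields $|J(x,t;\alpha,\beta) - J(x',t;\alpha,\beta)| \leq C'|x-x'|$ uniformly over $(\alpha,\beta) \in \cA^M$; taking the infimum over such controls and using the $\varepsilon$-reduction from the first step gives local Lipschitz continuity in $x$, uniformly in $t \in [t_0,t_1]$.

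For continuity in $t$, I would use the Dynamic Programming Principle (Proposition~\ref{DPP}): for $r < t$ close to $t$, $v(x,t) \leq \int_r^t \gamma(s,\alpha_s,\beta_s)\,\rd s + v(w(r),r)$ for any control, and choosing a bounded (e.g.\ constant) control the running-cost term is $O(t - r)$ while $|w(r) - x| = O(t-r)$ by \eqref{eq:flingrowth}; combined with the just-established Lipschitz continuity in $x$ this bounds $v(x,t) - v(x,r)$ from one side. For the reverse inequality one picks a near-optimal bounded control for $v(x,t)$, runs the DPP, and again controls both terms by $O(t-r)$. Patching the $x$- and $t$-estimates via the triangle inequality gives joint local Lipschitz continuity. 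The main obstacle is the first step: because the dynamics \eqref{eq:controlODE} have quadratic growth and trajectories can blow up or leave $U$ in finite time, one must genuinely use the coercivity of $\gamma$ together with $v_0 \in \cV$ to confine near-optimal trajectories to a fixed compact subset of $U$ on which the local Lipschitz bound \eqref{eq:loclipboundonf} and the truncation argument are valid; handling the interplay between the blow-up time, the $\varepsilon$-optimality, and the uniformity of $M$ over a neighbourhood of $(\ox,\ot)$ is the delicate part.
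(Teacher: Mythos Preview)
Your overall strategy is sound and can be made to work, but it differs from the paper's route, and one of your intermediate claims is incorrect as stated.

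The paper does not spell out the proof; it refers to Bardi and Da~Lio, whose argument works \emph{directly} with unbounded controls. The two ingredients the paper flags as essential are Young's inequality (this is the reason for $p>1$ in Assumption~\ref{assumptiongamma}) and the key estimate of Lemma~\ref{lemmakeyest}. That estimate, applied on $[0,s]$ rather than $[0,t]$, combined with coercivity of $\gamma$ and $v_0\in\cV$, confines every $\varepsilon$-optimal trajectory to a fixed compact subset of $U$ for all intermediate times. One then compares the trajectories from $x$ and $x'$ under the \emph{same} (possibly unbounded) control, bounding the Gr\"onwall exponent $\int_0^t(|\alpha_s|+\beta_s)\,\rd s$ directly by the running cost via Young's inequality. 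No preliminary reduction to $\cA^M$ is made.

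Your route instead truncates to bounded controls first. This can work, but two points need repair. First, in Step~2 you write that ``bounded controls over a bounded time interval keep trajectories in a fixed compact subset of $U$''. This is false: the paper's own Example (with $c\equiv 2$, $\alpha\equiv 0$, $\beta\equiv 1$) exhibits a bounded control whose backward trajectory blows up in finite time. What \emph{is} true is that the near-optimal trajectory from $x$ stays compact (via Lemma~\ref{lemmakeyest} and $v_0\in\cV$), and then a Gr\"onwall \emph{bootstrap}---valid only while the perturbed trajectory from $x'$ remains in a slightly larger compact set, which one then shows it never leaves---gives $|w(0)-w'(0)|\leq C|x-x'|$. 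Second, the same bootstrap is needed in your Step~1 to show the truncated trajectory stays close to the original one; because $f$ has quadratic growth in $x$, the ``standard bang-off truncation'' is not self-evidently stable here. Your closing paragraph rightly names this as the delicate point, but your Steps~1--2 gloss over exactly the mechanism (Lemma~\ref{lemmakeyest} plus bootstrap) that makes it go through. If you insert those arguments your proof works; alternatively, dropping the truncation and following the Bardi--Da~Lio line directly is shorter and is what the paper has in mind.
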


This result may be proved by adapting arguments from Section~2 of Bardi and Da Lio \cite{BardiDaLio1997}, though care is needed to account for the nonlinearity in the function $f$, and for the fact that our spatial domain is a half-plane. In particular, the use of Young's inequality is the reason for our insistence on $p > 1$ in Assumption~\ref{assumptiongamma}. Moreover, the following key estimate is required.

\begin{lemma}\label{lemmakeyest}
Let $T > 0$. There exists a constant $C > 0$ such that for any control $(\alpha,\beta)$ and any terminal pair $(x,t) \in U \times [0,T]$, the corresponding state trajectory $w(\cdot) = w(\cdot\,;x,t,\alpha,\beta)$ satisfies
\begin{equation}\label{eq:logbound0r}
\log\big(1 + |x|^2\big) \leq \log\big(1 + |w(0)|^2\big) + C\int_0^t\big(1 + |\alpha_s| + \beta_s\big)\,\rd s.
\end{equation}
\end{lemma}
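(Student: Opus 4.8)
The plan is to estimate the logarithmic derivative of $1 + |w(s)|^2$ along the trajectory and integrate. Write $\rho(s) := \log(1 + |w(s)|^2)$, and observe that wherever $w(s) \in U$,
\begin{equation*}
\frac{\rd \rho}{\rd s}(s) = \frac{2\, w(s) \cdot f(w(s),s,\alpha_s,\beta_s)}{1 + |w(s)|^2}.
\end{equation*}
(On $\R^2 \setminus U$ the same identity holds with the extended $f$ from \eqref{eq:defnfx2neg}; and since $f$ is continuous across the boundary, $\rho$ is Lipschitz in $s$ on $[0,t]$ whenever $w$ exists there, so the fundamental theorem of calculus applies.) The first step is therefore to bound the integrand $|w \cdot f(w,s,a,b)| \big/ (1 + |w|^2)$ by a constant multiple of $1 + |a| + b$, uniformly for $s \in [0,T]$. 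Using the growth bound \eqref{eq:boundonf}, one has $|f(x,t,a,b)| \leq L_f(1 + |a| + |a||x| + b|x| + b|x|^2)$, so
\begin{equation*}
\frac{|x| \, |f(x,t,a,b)|}{1 + |x|^2} \leq L_f \, \frac{|x| + |a||x| + |a||x|^2 + b|x|^2 + b|x|^3}{1 + |x|^2}.
\end{equation*}
The potential trouble is the term $b|x|^3/(1+|x|^2)$, which grows linearly in $|x|$; but the vector $f$ has the special structure that its second component is $-bz_2^2 - 2az_2 + c_t^2$ and its first is $-(z_1+\eta_t)(a+bz_2)$, so the genuinely cubic-in-$x$ contribution to $w \cdot f$ comes only from $-x_2 \cdot b x_2^2 = -b x_2^3$, which is \emph{nonpositive}. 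This is the crux: after pairing with $w$, the highest-order term in $b$ has a favourable sign.

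Concretely, I would compute $w \cdot f$ explicitly: with $x = (x_1,x_2)$,
\begin{equation*}
x \cdot f(x,s,a,b) = -x_1(x_1+\eta_s)(a + bx_2) - b x_2^3 - 2a x_2^2 + c_s^2 x_2.
\end{equation*}
The term $-bx_2^3$ is $\le 0$ when $x_2 > 0$ (the relevant case, since the bound is trivially adapted on $\R^2 \setminus U$ where the extended $f$ has no $bz_2^2$ term at all), so it can be dropped from an upper bound on $x \cdot f$; a lower bound is not needed because \eqref{eq:logbound0r} only requires controlling $\rho(t) - \rho(0)$ from above, i.e.\ we only need $\rho(t) \le \rho(0) + C\int_0^t(1+|\alpha_s|+\beta_s)\,\rd s$, which follows from $\rd\rho/\rd s \le (\text{positive part of } 2 x\cdot f)/(1+|x|^2)$. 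Every remaining term in $x \cdot f$ is at most quadratic in $|x|$ with a coefficient that is affine in $(|a|, b)$ and bounded in $s \in [0,T]$ (using local boundedness of $c$ and $\eta$), so each such term divided by $1 + |x|^2$ is bounded by $C_T(1 + |a| + b)$ for a constant $C_T$ depending only on $T$ and the sup-norms of $c,\eta$ on $[0,T]$. Collecting, $\rd\rho/\rd s \le 2C_T(1 + |\alpha_s| + \beta_s)$, and integrating from $0$ to $t$ yields $\rho(t) \le \rho(0) + C\int_0^t(1+|\alpha_s|+\beta_s)\,\rd s$ with $C = 2C_T$, which is exactly \eqref{eq:logbound0r}.

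The main obstacle, such as it is, is purely bookkeeping: one must make sure the sign cancellation of the $-bx_2^3$ term is invoked correctly (it only helps for an \emph{upper} bound on $x \cdot f$, which is all that is needed), and that the constant $C$ can be taken uniform over all terminal pairs $(x,t) \in U \times [0,T]$ and all controls — this is automatic once the per-term bounds are affine in $(|a|,b)$ with $s$-coefficients controlled on $[0,T]$. A minor point to address is the regularity of $\rho$ as a function of $s$: since $w$ solves \eqref{eq:controlODE} with continuous (in all variables) right-hand side $f$ and locally bounded controls, $w$ is absolutely continuous on $[0,t]$ and hence so is $\rho$, justifying the integration; there is no issue at the boundary $\{x_2 = 0\}$ because $f$ was extended continuously there in \eqref{eq:defnfx2neg}.
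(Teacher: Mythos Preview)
Your approach is essentially the same as the paper's: compute $w\cdot f$, observe that the cubic terms carry a favourable sign because $w_2>0$ and $\beta_s\ge 0$, bound the remainder by $C(1+|\alpha_s|+\beta_s)(1+|w(s)|^2)$, and integrate. The paper writes it as a differential inequality for $|w(s)|^2$ rather than for $\log(1+|w(s)|^2)$, but that is only a cosmetic difference.

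One small bookkeeping slip: you claim the only cubic contribution to $x\cdot f$ is $-b x_2^3$, but expanding the first component gives an additional cubic term $-b x_1^2 x_2$ (from $-x_1\cdot b x_2\cdot x_1$). This does not damage the argument, since $-b x_1^2 x_2\le 0$ for $x_2>0$ and $b\ge 0$ and can be dropped from the upper bound for exactly the same reason; the paper's phrase ``drop the highest order terms'' (plural) covers both. After discarding both cubic pieces, your claim that every surviving term is at most quadratic in $|x|$ with coefficients affine in $(|a|,b)$ is correct, and the rest goes through as you wrote.
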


\begin{proof}
Using the fact that the functions $\eta$ and $c$ are locally bounded, we have
\begin{align*}
\frac{1}{2}\frac{\rd}{\rd s}&|w(s)|^2 = w(s) \cdot f(w(s),s,\alpha_s,\beta_s)\\
&= -w_1(s)\big(w_1(s) + \eta_s\big)\big(\alpha_s + \beta_sw_2(s)\big) -\beta_sw_2(s)^3 - 2\alpha_sw_2(s)^2 + c_s^2w_2(s)\\
&\leq C\big(1 + |\alpha_s| + \beta_s\big)\big(1 + |w(s)|^2\big),
\end{align*}
where crucially we notice that, since $w_2(s) > 0$ and $\beta_s \geq 0$, we are able to drop the highest order terms in $w$. Rearranging and integrating this expression, we obtain \eqref{eq:logbound0r}.
\end{proof}

The previous lemma implies a type of (partial) finite speed of propagation property. That is, although the state trajectories $w$ may have an infinite speed of propagation `toward infinity', they can only return `toward zero' with a finite speed.

One can also show that the approximate value function $v^M$ is locally Lipschitz continuous on $Q^M$, but we will not make use of this result.

An immediate consequence of Theorem~\ref{valuefncloclip} is that the value function $v$ belongs to the Sobolev space $W^{1,\infty}_{\mathrm{loc}}(Q)$. In particular, $v$ has a weak derivative, has a classical derivative almost everywhere, and these two notions of derivative coincide almost everywhere (see Evans \cite{Evans2010}). Moreover, $v$ satisfies the HJB equation \eqref{eq:HJB} in the classical sense at each point where its classical derivative exists.

Recall our assumption that the initial cost function $v_0$ belongs to the class $\cV$ (defined in Definition~\ref{defncV}). As the last result of this section, we show that the value function $v$ also satisfies a similar type of `weak coercivity', which one may also think of as an `explosive boundary condition'.

\begin{definition}\label{defnVtloc}
We shall denote by $\cVt$ the class of functions $u \colon Q \to \R$ which are bounded below, and satisfy, for every $T > 0$,
\begin{equation}\label{eq:valuefuncasymp1}
\inf_{0 \leq t \leq T}u(x,t) \longrightarrow \infty \qquad \text{as} \quad\ |x| \longrightarrow \infty,
\end{equation}
and
\begin{equation}\label{eq:valuefuncasymp2}
\inf_{x_1 \in \R,\ 0 \leq t \leq T}u(x_1,x_2,t) \longrightarrow \infty \qquad \text{as} \quad\ x_2 \longrightarrow 0^+.
\end{equation}
\end{definition}

\begin{lemma}\label{lemmaasympvaluefunc}
$v \in \cVt$.
\end{lemma}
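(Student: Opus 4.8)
The goal is to show $v \in \cVt$, i.e.\ that $v$ is bounded below on $Q$ and satisfies the two explosive boundary conditions \eqref{eq:valuefuncasymp1} and \eqref{eq:valuefuncasymp2} on every time slab $[0,T]$. Boundedness below is immediate: since $\gamma \geq 0$ by Assumption~\ref{assumptiongamma} and $v_0$ is bounded below on $U$ (it belongs to $\cV$), for any control we have $J(x,t;\alpha,\beta) \geq \inf_{z \in U} v_0(z) > -\infty$, hence $v$ is bounded below by the same constant. The substance of the lemma is therefore the two coercivity statements, and the plan is to deduce both from the key estimate in Lemma~\ref{lemmakeyest} together with the superlinearity of $\gamma$ from Assumption~\ref{assumptiongamma}.

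First I would fix $T > 0$ and an arbitrary control $(\alpha,\beta) \in \cA$ and terminal pair $(x,t) \in U \times [0,T]$. Applying Lemma~\ref{lemmakeyest} gives
$$\log\big(1 + |x|^2\big) \leq \log\big(1 + |w(0)|^2\big) + C\int_0^t\big(1 + |\alpha_s| + \beta_s\big)\,\rd s.$$
Now I split into two cases according to the size of $|w(0)|$. If $w(0) \notin U$ then $v_0(w(0)) = \infty$ and that control contributes $+\infty$ to the infimum, so it may be ignored. If $w(0) \in U$ but $|w(0)|$ is large, then since $v_0 \in \cV$ we have $v_0(w(0))$ large. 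If instead $|w(0)|$ stays bounded, say $|w(0)| \leq \rho$, then the estimate forces $\int_0^t (1 + |\alpha_s| + \beta_s)\,\rd s \geq C^{-1}\big(\log(1+|x|^2) - \log(1+\rho^2)\big)$, which is large when $|x|$ is large; and by the superlinear growth \eqref{eq:gammasuperlin} (via an application of Young's inequality, exactly as in the proof of Theorem~\ref{valuefncloclip}) a large value of $\int_0^t(1+|\alpha_s|+\beta_s)\,\rd s$ forces a large value of $\int_0^t \gamma(s,\alpha_s,\beta_s)\,\rd s$, hence a large cost. Making this dichotomy quantitative — choosing the threshold $\rho$ as a function of the target lower bound, and checking that in every case $J(x,t;\alpha,\beta)$ exceeds that bound uniformly in the control and in $t \in [0,T]$ — yields \eqref{eq:valuefuncasymp1}. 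I expect this case analysis, and in particular extracting the $\gamma$-lower bound from \eqref{eq:gammasuperlin} uniformly over $[0,T]$, to be the main obstacle, though it is essentially the same Young's inequality argument already invoked for Theorem~\ref{valuefncloclip}.

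For \eqref{eq:valuefuncasymp2} the argument is analogous but uses the second-component dynamics directly. Recall from \eqref{eq:w2ODE} that $\frac{\rd w_2}{\rd s} = -\beta_s w_2(s)^2 - 2\alpha_s w_2(s) + c_s^2$, so on $[0,t]$, using $\beta_s w_2^2 \geq 0$ and $c$ locally bounded, one gets a differential inequality of the form $\frac{\rd w_2}{\rd s} \geq -2|\alpha_s| w_2(s) - C$ (and similarly an upper bound), from which Grönwall gives $w_2(0) \leq w_2(t)\exp\big(2\int_0^t|\alpha_s|\,\rd s\big) + C'\big(\exp(2\int_0^t|\alpha_s|\,\rd s)-1\big)$, with $C'$ depending only on $\sup_{[0,T]}c^2$. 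Thus if $x_2 = w_2(t)$ is small, then either $\int_0^t |\alpha_s|\,\rd s$ is large — forcing $\int_0^t\gamma\,\rd s$ large as above, hence large cost — or $\int_0^t|\alpha_s|\,\rd s$ is bounded, in which case $w_2(0)$ is small, so $w(0)$ lies outside $U$ or close to its boundary $\{x_2 = 0\}$, and since $v_0 \in \cV$ (uniformly in the first coordinate) we again get large $v_0(w(0))$. Taking the infimum over controls then shows $\inf_{x_1 \in \R,\, t \in [0,T]} v(x_1,x_2,t) \to \infty$ as $x_2 \to 0^+$. Finally, combining the two displays and the boundedness below proves $v \in \cVt$.
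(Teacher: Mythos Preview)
Your argument for boundedness below and for \eqref{eq:valuefuncasymp1} is correct and matches the paper's approach; the paper just packages the case analysis into a single inequality by using \eqref{eq:gammasuperlin} to replace $|\alpha_s|+\beta_s$ by $1+\gamma(s,\alpha_s,\beta_s)$ directly in \eqref{eq:logbound0r}, but the content is the same.

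Your treatment of \eqref{eq:valuefuncasymp2}, however, has a real gap. The differential inequality $\frac{\rd w_2}{\rd s} \geq -2|\alpha_s|w_2(s) - C$ is false: since $\beta_s \geq 0$ and $w_2(s) > 0$, the term $-\beta_s w_2(s)^2$ is \emph{nonpositive}, so you cannot drop it when seeking a lower bound on the derivative. Concretely, take $\alpha \equiv 0$, $c \equiv 1$, and $\beta$ a large constant; then $\int_0^t|\alpha_s|\,\rd s = 0$, yet one can have $w_2(0) = 1$ and $w_2(t) = x_2$ arbitrarily small by taking $\beta t$ of order $1/x_2$. So your dichotomy ``either $\int|\alpha|$ is large or $w_2(0)$ is small'' fails; the $\beta$-dependence cannot be ignored.

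The fix is to keep $\beta$ in the estimate. The paper writes
\[
\frac{\rd w_2}{\rd s} \geq -2\big(|\alpha_s|+\beta_s\big)w_2(s)\big(1+w_2(s)\big),
\]
which is a valid lower bound (absorbing both the linear and quadratic terms), and then separates variables to obtain
\[
\log\bigg(\frac{w_2(0)}{1+w_2(0)}\bigg) - C\int_0^t\big(1+\gamma(s,\alpha_s,\beta_s)\big)\,\rd s \leq \log\bigg(\frac{x_2}{1+x_2}\bigg),
\]
again using \eqref{eq:gammasuperlin} to pass from $|\alpha_s|+\beta_s$ to $\gamma$. Now the dichotomy is correct: as $x_2 \to 0^+$, either $\int_0^t\gamma\,\rd s$ is large or $w_2(0)/(1+w_2(0))$ is small (forcing $w_2(0)$ small), and in both cases the cost is large.
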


\begin{proof}
Recall the estimate \eqref{eq:logbound0r} from Lemma~\ref{lemmakeyest}, which holds for any choice of control $(\alpha,\beta)$. Using \eqref{eq:gammasuperlin}, we obtain
\begin{equation}\label{eq:logbound1}
\log\big(1 + |x|^2\big) \leq \log\big(1 + |w(0)|^2\big) + C\int_0^t\big(1 + \gamma(s,\alpha_s,\beta_s)\big)\,\rd s,
\end{equation}
with a possibly different constant $C$. Recalling the definition of the value function,
\begin{equation*}
v(x,t) = \inf_{(\alpha,\beta) \in \cA}\bigg\{\int_0^t\gamma(s,\alpha_s,\beta_s)\,\rd s + v_0\big(w(0;x,t,\alpha,\beta)\big)\bigg\},
\end{equation*}
and the fact that $v_0 \in \cV$, we deduce that \eqref{eq:valuefuncasymp1} holds for $v$.

The second component of a given state trajectory satisfies
\begin{align*}
\frac{\rd w_2}{\rd s}(s) &= -\beta_sw_2(s)^2 - 2\alpha_sw_2(s) + c_s^2\\
&\geq -2\big(|\alpha_s| + \beta_s\big)w_2(s)\big(1 + w_2(s)\big),
\end{align*}
from which, using \eqref{eq:gammasuperlin} again, we obtain
\begin{equation}\label{eq:logbound2}
\log\bigg(\frac{w_2(0)}{1 + w_2(0)}\bigg) - C\int_0^t\big(1 + \gamma(s,\alpha_s,\beta_s)\big)\,\rd s \leq \log\bigg(\frac{x_2}{1 + x_2}\bigg)
\end{equation}
for some constant $C > 0$. From the definition of the value function and fact that $v_0 \in \cV$, we similarly deduce that \eqref{eq:valuefuncasymp2} holds for $v$.
\end{proof}

\section{Uniqueness for the HJB equation}\label{SectionUniqueness}

In the previous section we saw that the value function $v$ is a viscosity solution of the HJB equation \eqref{eq:HJB}. We now turn our attention to the question of whether this solution is unique.

There is a substantial amount of existing literature on uniqueness results for solutions of first-order HJB equations. The standard approach to obtaining such results is via a `doubling the variables' type argument, which provides what is essentially a maximum principle for viscosity solutions of certain PDEs (see Crandall, Ishii and Lions \cite{CrandallIshiiLions1992}). However, typical treatments of such equations on an unbounded domain (e.g.~in Barles \cite{AchdouBarlesLitvinovIshii2013} or Evans \cite{Evans2010}) assume that the solutions are bounded and uniformly continuous, whereas our solution $v$ exhibits explosive behaviour near the boundary of $U$. In Yong and Zhou \cite{YongZhou1999} solutions are only assumed to be continuous, but the Hamiltonian is assumed to grow at most linearly in the spatial variable $x$. In our case the state trajectories obey an ODE \eqref{eq:controlODE} which is quadratic in $x$, which means that our Hamiltonian also has quadratic growth in $x$. Moreover, although the function $\eta$ is locally H\"older continuous, it is not locally Lipschitz, as would be required in order to apply the sub/super-optimality principle (see Lions and Souganidis \cite{LionsSouganidis1985}).

On the other hand, when working on a bounded domain where the comparison is assumed to hold on the parabolic boundary of the domain, standard comparison results are less restrictive in their assumptions on the Hamiltonian. Our strategy here is therefore to exploit properties of the value function in order to be able to restrict attention to a bounded subdomain, and then apply a standard comparison theorem from Barles \cite{AchdouBarlesLitvinovIshii2013}.

The rest of this section is dedicated to the proof of Theorem~\ref{TheoremUniqueness} below, which we shall prove under the following additional mild assumption.

\begin{assumption}\label{assumptiongammafinite}
We assume that $\gamma(t,0,0) < \infty$ for all $t > 0$.
\end{assumption}

To the best of our knowledge our technique is new, and is applicable to more general nonlinear optimal control problems. The key is the fact that the value function $v$ depends only on local values of the initial cost function $v_0$. We can therefore manipulate the asymptotic behaviour of $v_0$, and hence that of $v$, without changing the local behaviour of $v$. This is the motivation for Propositions~\ref{PropvissubcompHJBM} and \ref{PropvissupercompHJB} below. We will then be able to deduce the following result.

\begin{theorem}\label{TheoremUniqueness}
The value function $v$ is the unique viscosity solution of the HJB equation \eqref{eq:HJB} in the class $\cVt$ which satisfies the initial condition $v(x,0) = v_0(x)$ for all $x \in U$.
\end{theorem}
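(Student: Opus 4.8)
The plan is to establish a comparison principle: if $u$ is any viscosity solution of \eqref{eq:HJB} in the class $\cVt$ with $u(x,0) = v_0(x)$, then $u = v$ on $Q$. Since $v$ is itself such a solution by Proposition~\ref{valuefuncvissolnHJB} and Lemma~\ref{lemmaasympvaluefunc}, uniqueness follows. The obstacle is that standard comparison theorems on unbounded domains require bounded, uniformly continuous solutions, whereas both $v$ and $u$ blow up near $\{x_2 = 0\}$ and near $|x| = \infty$, and the Hamiltonian has quadratic growth in $x$. The key idea, as flagged in the text, is that $v$ depends only on \emph{local} values of $v_0$, so we can localise to a bounded subdomain on whose parabolic boundary we already have the comparison, and then invoke the less restrictive comparison result of Barles \cite{AchdouBarlesLitvinovIshii2013}.

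Concretely, I would proceed as follows. First, fix $T > 0$ and work on $Q_T = U \times (0,T)$; it suffices to show $u = v$ on $Q_T$ for every $T$. Second, establish the two localisation propositions referenced in the text (Propositions~\ref{PropvissubcompHJBM} and \ref{PropvissupercompHJB}): roughly, since $u \in \cVt$ it exceeds a large constant outside some bounded region, and by modifying $v_0$ outside a slightly larger bounded set — replacing it by something that grows fast — one obtains a modified value function $\tilde v$ which agrees with $v$ on the region of interest (because optimal trajectories terminating there never reach the modified region, thanks to the partial finite-speed-of-propagation estimate of Lemma~\ref{lemmakeyest}), but which now has controlled behaviour. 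One then arranges that on the lateral and far boundary of a bounded box $B \times (0,T) \subset Q_T$, $u$ dominates the sub-solution side and is dominated by the super-solution side. The roles of $H$ and the truncated $H_M$ enter here: on a compact set $H = H_M$ for $M$ large (by the lemma following Assumption~\ref{assumptionv0}), so $v^M$ and $v$ coincide there, which is what makes the comparison on the bounded box go through with a Hamiltonian that is effectively Lipschitz in $x$ on that box.

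Third, with everything confined to a bounded cylinder $B \times (0,T)$ whose parabolic boundary consists of $\{t=0\}$ (where $u = v_0 = v$) together with $\partial B \times [0,T]$ (where, after the modification, $u$ and $v$ can be ordered, or where both are forced to be large), apply the standard comparison theorem from \cite{AchdouBarlesLitvinovIshii2013} to the equation $\partial_t w + H(x,t,\nabla w) = 0$ restricted to this cylinder. This requires checking the structural hypotheses of that theorem — local Lipschitz continuity of $H$ in $x$ uniformly in $p$ on compacts (available on $B$), and the modulus-of-continuity condition $|H(x,t,p) - H(y,t,p)| \le \omega(|x-y|(1+|p|))$ — which hold because on the bounded box $f$ is Lipschitz in $z$ with a bound linear in $(|a|+b)$ by \eqref{eq:loclipboundonf}, and $\gamma$ controls the $(a,b)$-growth via Assumption~\ref{assumptiongamma}. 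Concluding $u \le v$ and $u \ge v$ on $B \times (0,T)$, then letting $B \uparrow U$ and $T \uparrow \infty$, gives $u = v$ on $Q$.

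I expect the main obstacle to be the construction and verification in the second step — namely showing rigorously that one can modify $v_0$ far away without disturbing $v$ locally, and simultaneously pin down the boundary ordering of an arbitrary competitor $u \in \cVt$ against the modified value function on $\partial B \times [0,T]$. This is where Lemma~\ref{lemmakeyest} (the logarithmic, partial finite-speed bound) and the coercivity in Assumption~\ref{assumptiongamma} do the real work: the former guarantees that trajectories landing in a fixed compact set at time $t$ had, at earlier times, bounded $|w|$ and $w_2$ bounded away from $0$, so a localisation radius depending only on $T$ and the compact set can be chosen; the latter ensures the modified initial cost is still only relevant through finite-cost (hence bounded) controls. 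Assumption~\ref{assumptiongammafinite} ($\gamma(t,0,0) < \infty$) enters to guarantee that the constant control $(0,0)$ is always admissible with finite running cost, which is needed for the upper bounds used in the localisation.
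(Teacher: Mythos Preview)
Your plan matches the paper's approach: localise via modified initial data, reduce to a bounded cylinder, and apply Barles' comparison theorem. The key ingredients you identify --- the local dependence of $v$ on $v_0$, Lemma~\ref{lemmakeyest}, the coercivity of $\gamma$, and the role of Assumption~\ref{assumptiongammafinite} --- are exactly the ones the paper uses.

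However, your description conflates the two halves of the argument, and this conceals a genuine asymmetry you would need to get right. In Proposition~\ref{PropvissubcompHJBM} (showing $u \le v$ from the subsolution property of $u$), one modifies $v_0$ to grow \emph{fast} and works with the bounded-control value function $v^M$; the point is that with controls bounded by $M$ the modified value function $\hat V$ can be forced above $u$ on the lateral boundary, and the paper's footnote stresses that this step \emph{fails} for unbounded controls. In Proposition~\ref{PropvissupercompHJB} (showing $v \le \hat u$ from the supersolution property of $\hat u \in \cVt$), the modification goes the other way: one replaces $v_0$ by something growing \emph{slowly}, and uses the constant control $(0,0)$ (hence Assumption~\ref{assumptiongammafinite}) to bound the modified $V$ above, so that it falls below $\hat u$ on the lateral boundary. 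Your sketch mentions only the fast-growth modification and then invokes the $(0,0)$ control, which belong to opposite halves.

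A smaller correction: your claim that ``$H = H_M$ on compacts, so $v^M$ and $v$ coincide there'' is not the mechanism. The equality $H = H_M$ holds on compact sets of $(x,t,p)$, not of $(x,t)$, and does not by itself give $v^M = v$. The paper instead uses that $H_M \le H$ makes any subsolution of \eqref{eq:HJB} automatically a subsolution of \eqref{eq:HJBM}, whence $u \le v^M$ by Proposition~\ref{PropvissubcompHJBM}, and then passes to $v$ via $v^M \le J(\cdot\,;\overline\alpha,\overline\beta) < v + \varepsilon$ for a near-optimal bounded control (Corollary~\ref{CorovissubcompHJB}).
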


Note that, rather than any sort of integrability or regularity condition in the usual sense, the additional condition we impose to establish uniqueness for our equation is actually an `explosive boundary condition'. Further, notice that although we insist that the relevant solutions converge to infinity as we approach the boundary of $U$, we make no restriction whatsoever on the \emph{speed} of this convergence.

In the following, by a viscosity (sub/super)solution we will always mean a continuous viscosity (sub/super)solution. As in the previous section, we consider supersolutions in the sense of Definition~\ref{Defninfinitesupersolns}. In particular, we allow viscosity supersolutions to be infinite-valued, provided that they are still `continuous' in the sense that they tend to infinity as they approach the part of the domain where they are infinite-valued.

It will be helpful to first restrict ourselves to the case of uniformly bounded controls, and consider solutions of the modified HJB equation \eqref{eq:HJBM}. From the bound \eqref{eq:loclipboundonf} on the function $f$, we easily deduce that for every $T > 0$ there exists a constant $L_M > 0$ such that
\begin{equation}\label{eq:xlipboundonHM}
\big|H_M(x,t,p) - H_M(z,t,p)\big| \leq L_M\big(1 + |x| + |z|\big)|x - z||p|
\end{equation}
for all $x,z \in U$, $t \in [0,T]$ and $p \in \R^2$.

\begin{proposition}\label{PropvissubcompHJBM}
Fix an $M > 0$, and let $u$ be a viscosity subsolution of the modified HJB equation \eqref{eq:HJBM} in $Q_T$, for some $T > 0$. Suppose that $u(x,0) \leq v_0(x)$ for all $x \in U$. Then $u \leq v^M$ in $Q_T$.
\end{proposition}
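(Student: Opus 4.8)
Proof proposal for Proposition~\ref{PropvissubcompHJBM}.

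The plan is to prove $u\le v^M$ by a viscosity comparison argument between the subsolution $u$ and the supersolution $v^M$ of the \emph{same} equation \eqref{eq:HJBM} (recall $v^M$ is such a supersolution, in the sense of Definition~\ref{Defninfinitesupersolns}, by Proposition~\ref{valuefuncvissolnHJB}). The two features that rule out a direct appeal to a textbook comparison theorem are that $Q_T$ is unbounded and that $v^M$ is infinite-valued, blowing up as one approaches $\partial(Q\setminus Q^M)$; the guiding idea is to turn the blow-up into an aid rather than an obstacle.

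The first step is to reduce to the case that $u$ is bounded above. Since $\gamma\ge 0$ by Assumption~\ref{assumptiongamma}, we have $H_M(x,t,0)=-\inf_{|a|+b\le M}\gamma(t,a,b)\le 0$ for every $(x,t)$, so every constant function is a viscosity subsolution of \eqref{eq:HJBM}; as the pointwise minimum of two viscosity subsolutions is again one, $u_N:=u\wedge N$ is, for each $N\in\R$, a continuous viscosity subsolution of \eqref{eq:HJBM} in $Q_T$, it is bounded above by $N$, and it still satisfies $u_N(\cdot,0)\le v_0$. If $u_N\le v^M$ for every $N$, then, $u$ being finite-valued, letting $N\to\infty$ gives $u\le v^M$. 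So from now on I may assume $u\le N_0$ on $Q_T$ for some constant $N_0$.

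Now fix $\delta>0$ and set $\Phi:=u-v^M-\delta/(T-t)$ on $Q^M_T$. Recalling that $v^M\ge v$ (the infimum defining $v^M$ runs over a smaller class of controls), that $v\in\cVt$ by Lemma~\ref{lemmaasympvaluefunc}, and that $v^M\to\infty$ near $\partial(Q\setminus Q^M)$ by Definition~\ref{Defninfinitesupersolns}, and using $u\le N_0$ together with the boundedness below of $v^M$, one checks that $\Phi$ is continuous on the open set $Q^M_T$ and that, as $(x,t)$ leaves every compact subset of $Q^M_T$, it tends to values $\le -\delta/T$: indeed $\Phi\to-\infty$ as $|x|\to\infty$, as $x_2\to0^+$, as $(x,t)\to\partial(Q\setminus Q^M)$ and as $t\to T^-$, while near $U\times\{0\}$ we have $u\le v_0=v^M(\cdot,0)$ and hence $\Phi\le-\delta/T<0$. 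Thus, if $\sup_{Q^M_T}\Phi>0$, this supremum is attained at an interior point $(\hat x,\hat t)\in Q^M_T$ (so $0<\hat t<T$), where both $u$ and $v^M$ are finite and $v^M$ is an ordinary viscosity supersolution of \eqref{eq:HJBM}. Working on a bounded neighbourhood of $(\hat x,\hat t)$ whose closure lies in $Q^M$, I would then run the standard doubling-of-variables argument — equivalently, invoke the comparison theorem of Barles \cite{AchdouBarlesLitvinovIshii2013}, whose structural hypotheses hold here because $H_M$ is continuous and, by \eqref{eq:xlipboundonHM}, locally Lipschitz in $x$ with a constant that is bounded on the neighbourhood. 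The derivative of the penalty $\delta/(T-t)$ then contributes a strictly positive quantity to the resulting inequality, producing a contradiction. Hence $\sup_{Q^M_T}\Phi\le0$, that is, $u-v^M\le\delta/(T-t)$ on $Q^M_T$; since $\delta>0$ was arbitrary, $u\le v^M$ on $Q^M_T$, and therefore on all of $Q_T$ (elsewhere $v^M=\infty$). Combined with the first step, this proves the proposition.

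I expect the comparison step to be the main obstacle, for two intertwined reasons. First, the unboundedness of the domain together with the explosion of $v^M$ means one cannot quote an off-the-shelf comparison theorem; the remedy is precisely the confinement in the paragraph above, which uses the explosion of $v^M$ (inherited from $v\in\cVt$ and from Definition~\ref{Defninfinitesupersolns}) to trap the relevant maximiser in a compact subset of $Q^M_T$. Second, and more subtly, the time-regularity is low — $\eta$ is only locally H\"older and $c$ only continuous, so $H_M$ is merely continuous, not Lipschitz, in $t$ — which is exactly why a sub/super-optimality argument is unavailable, and why one must appeal to a comparison theorem whose proof tolerates this (in the doubling, by sending the time-penalty parameter to zero before the spatial one, with all gradients confined to the compact set carved out above).
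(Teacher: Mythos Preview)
Your overall strategy is sound and genuinely different from the paper's, but Step~1 contains an unjustified claim. You assert that ``the pointwise minimum of two viscosity subsolutions is again one'' as though it were a general fact; it is not. For supersolutions the infimum is stable, and for subsolutions the \emph{supremum} is stable, but the minimum of two viscosity subsolutions can fail to be one even for first-order equations (take $H(p)=-p^3$: both $u_1=-t-x$ and $u_2=8t+2x$ are classical solutions of $u_t-u_x^3=0$, yet $(\tau,p)=(2,0)$ lies in the superdifferential of $\min(u_1,u_2)$ at the origin and gives $\tau-p^3=2>0$). What rescues you here is the specific structure of $H_M$: it is a supremum over $(a,b)$ of functions affine in $p$, so a viscosity subsolution of \eqref{eq:HJBM} is equivalently a viscosity subsolution of each linear transport inequality $u_t+f(\cdot,\cdot,a,b)\cdot Du\le\gamma(\cdot,a,b)$; for a single linear transport equation with locally Lipschitz-in-$x$ drift, subsolutions are exactly the functions that are nonincreasing (after subtracting $\int\gamma$) along characteristics, and the minimum of two nonincreasing functions is nonincreasing. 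This fills the gap, but it is not the one-line fact you invoke, and without it your reduction to bounded $u$ collapses.

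Once $u$ is bounded above, your Step~2 is essentially correct: the coercivity $v^M\ge v\in\cVt$ together with the blow-up of $v^M$ at $\partial(Q\setminus Q^M)$ does trap any positive maximum of $\Phi$ strictly inside $Q^M_T$, and the local comparison there goes through via \eqref{eq:xlipboundonHM} and Barles' theorem. This route is more direct than the paper's. The paper does \emph{not} truncate $u$; instead it builds an auxiliary supersolution $\hat V$ by replacing $v_0$ with an initial cost $\hat V_0$ that agrees with $v_0$ on a large compact but grows arbitrarily fast at infinity and near $x_2=0$. Because the controls in $\cA^M$ are uniformly bounded, trajectories move only a controlled distance in time $T_0$, so this fast growth of $\hat V_0$ transfers to $\hat V$, forcing $\hat V\ge u$ on the boundary of a bounded cylinder $E\times(0,T_0)$; comparison is then applied as a black box on that cylinder, and one checks separately that $\hat V=v^M$ on the inner region $K\times(0,T_0)$. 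Your approach trades the construction of $\hat V$ for the truncation $u\wedge N$ plus an appeal to Lemma~\ref{lemmaasympvaluefunc}; the paper's approach trades the truncation for a locality argument about the value function. Both ultimately rest on the same bounded-domain comparison and the same Lipschitz estimate \eqref{eq:xlipboundonHM}.
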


\begin{proof}
Fix an arbitrary compact set $K \subset U$, and choose $T_0 \in (0,T]$ sufficiently small such that $K \times (0,T_0) \subset Q^M$. Let us recall the definition of the approximate value function:
\begin{equation*}
v^M(x,t) = \inf_{(\alpha,\beta) \in \cA^M}\bigg\{\int_0^t\gamma(s,\alpha_s,\beta_s)\,\rd s + v_0\big(w(0;x,t,\alpha,\beta)\big)\bigg\}.
\end{equation*}
As $v^M$ is continuous on $Q^M$, it is bounded on $K \times (0,T_0)$. Since $\gamma$ and $v_0$ are bounded below, it follows that there exists a constant $C_0 > 0$ such that, when considering the value of $v^M(x,t)$ for $x \in K$ and $t \in (0,T_0)$, it is sufficient to consider only those controls $(\alpha,\beta)$ which satisfy
\begin{equation}\label{eq:boundonv0}
v_0\big(w(0;x,t,\alpha,\beta)\big) < C_0.
\end{equation}

For $\delta > 0$, define the set $U_\delta := \{x = (x_1,x_2) \in U\, :\, |x| < 1/\delta,\ x_2 > \delta\}$. Since $v_0 \in \cV$ (recall Definition~\ref{defncV}), there exists a $\delta > 0$ sufficiently small such that $K \subset U_\delta$, and such that
\begin{equation*}
v_0(x) \geq C_0 \quad \text{for all} \quad x \in U \setminus U_\delta.
\end{equation*}
For this $\delta$, let $\hat{V}_0$ be a continuous function belonging to the class $\cV$, such that
\begin{itemize}
\item $\hat{V}_0(x) = v_0(x)$ \ for \ $x \in U_\delta$,
\item $\hat{V}_0(x) \geq C_0$ \ for \ $x \in U \setminus U_\delta$,
\item $\hat{V}_0(x) = \infty$ \ for \ $x \in \R^2 \setminus U$.
\end{itemize}
We now define
\begin{equation}
\hat{V}(x,t) := \inf_{(\alpha,\beta) \in \cA^M}\bigg\{\int_0^t\gamma(s,\alpha_s,\beta_s)\,\rd s + \hat{V}_0\big(w(0;x,t,\alpha,\beta)\big)\bigg\}
\end{equation}
for $x \in U$ and $t \geq 0$, where, just as when we considered the value function $v$ in Section~\ref{SectionReformulation}, we associate an infinite `initial' cost to state trajectories which `blow up' after a finite time.

The function $\hat{V}$ is defined in the same way as the approximate value function $v^M$, except with a different initial cost function, namely $\hat{V}_0$ instead of $v_0$. We note that $\hat{V}$ is also finite-valued precisely in the subdomain $Q^M$. Further, the result of Proposition~\ref{valuefuncvissolnHJB} applies equally well to the function $\hat{V}$, which is hence a viscosity supersolution of the modified HJB equation \eqref{eq:HJBM}, with initial condition $\hat{V}(x,0) = \hat{V}_0(x)$ for all $x \in U$.

Since $v_0 \geq C_0$ and $\hat{V}_0 \geq C_0$ in $\R^2 \setminus U_\delta$, it follows from \eqref{eq:boundonv0} that when considering the values of $v^M(x,t)$ and $\hat{V}(x,t)$ for $x \in K$ and $t \in (0,T_0)$, we may ignore all controls $(\alpha,\beta)$ such that the corresponding initial point $w(0;x,t,\alpha,\beta)$ lies in $\R^2 \setminus U_\delta$. Since, $v_0$ and $\hat{V}_0$ are equal in $U_\delta$, we deduce that
\begin{equation}\label{eq:vMequalhatVonK}
v^M(x,t) = \hat{V}(x,t) \qquad \text{for all} \quad\ x \in K,\ t \in (0,T_0).
\end{equation}

Recall the estimates \eqref{eq:logbound1} and \eqref{eq:logbound2} from the proof of Lemma~\ref{lemmaasympvaluefunc}, which in particular hold for all controls $(\alpha,\beta) \in \cA^M$. Since $\hat{V}_0 \in \cV$, we can argue exactly as in Lemma~\ref{lemmaasympvaluefunc} that $\hat{V} \in \cVt$. Moreover, since $u$ is continuous and hence locally bounded, and since we are only considering controls which are uniformly bounded\footnote{The assumption of uniformly bounded controls is essential here. This argument could not therefore be applied directly to the value function $v$.} (by $M$), we see from \eqref{eq:logbound1} and \eqref{eq:logbound2} that by choosing $\hat{V}_0$ to grow sufficiently quickly as $|x| \to \infty$ and as $x_2 \to 0$, we can ensure that $\hat{V}_0(x) \geq v_0(x)$ for all $x \in U$, and that
$$\inf_{0 \leq t \leq T_0}\hat{V}(x,t) \geq \sup_{0 \leq t \leq T_0}u(x,t)$$
for all sufficiently large values of $|x|$, and for all sufficiently small values of $x_2$. Thus, there exists a bounded open subset $E \Subset U$, such that $K \subset E$, and such that $\hat{V} \geq u$ on $\pa E \times [0,T_0]$.

Since $\hat{V}(x,0) = \hat{V}_0(x) \geq v_0(x) \geq u(x,0)$ for all $x \in U$, we see that $\hat{V} \geq u$ on the parabolic boundary of $E \times (0,T_0)$. Since \eqref{eq:xlipboundonHM} holds, we may apply Theorem~5.1 in Barles \cite{AchdouBarlesLitvinovIshii2013} on the subdomain\footnote{Strictly speaking the supersolution $\hat{V}$ could be infinite-valued in part of this domain, but since $\hat{V}$ is continuous in the sense of Definition~\ref{Defninfinitesupersolns}, this requires only a trivial extension of the theorem, with essentially no change to its proof.} $E \times (0,T_0)$ with the subsolution $u$ and the supersolution $\hat{V}$, to deduce that $u \leq \hat{V}$ in $E \times (0,T_0)$. By \eqref{eq:vMequalhatVonK}, we have that $u \leq v^M$ in $K \times (0,T_0)$.

Since any point $(x,t) \in Q^M_T$ belongs to a set of the form $K \times (0,T_0)$ for some compact $K \subset U$ and some $T_0 \in (0,T]$, and since $v^M(x,t) = \infty$ for $(x,t) \in Q_T \setminus Q^M_T$, we have the result.
\end{proof}

\begin{corollary}\label{CorovissubcompHJB}
Let $u$ be a viscosity subsolution of the HJB equation \eqref{eq:HJB} in $Q_T$, for some $T > 0$, and suppose that $u(x,0) \leq v_0(x)$ for all $x \in U$. Then $u \leq v$ in $Q_T$.
\end{corollary}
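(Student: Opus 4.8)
The plan is to reduce the corollary almost immediately to Proposition~\ref{PropvissubcompHJBM}, by observing that a subsolution of the full HJB equation is automatically a subsolution of every modified HJB equation, and that the value function $v$ is the pointwise infimum of the approximate value functions $v^M$ on a bounded-time slab.

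First I would note that, directly from the definitions \eqref{eq:defnHamiltonian} and \eqref{eq:defnHM}, the supremum defining $H_M(x,t,p)$ is taken over a subset of the set over which $H(x,t,p)$ is taken, so $H_M(x,t,p) \leq H(x,t,p)$ for every $M > 0$ and every $(x,t,p) \in U \times (0,\infty) \times \R^2$. Consequently, if $u$ is a viscosity subsolution of \eqref{eq:HJB} in $Q_T$ and $\vp \in C^1(Q_T)$ is such that $u - \vp$ has a local maximum at some $(x,t) \in Q_T$, then $\frac{\pa\vp}{\pa t}(x,t) + H_M\big(x,t,\nabla\vp(x,t)\big) \leq \frac{\pa\vp}{\pa t}(x,t) + H\big(x,t,\nabla\vp(x,t)\big) \leq 0$; that is, $u$ is also a viscosity subsolution of the modified HJB equation \eqref{eq:HJBM} in $Q_T$, for every $M > 0$.

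Since the hypothesis $u(x,0) \leq v_0(x)$ for all $x \in U$ is unchanged, I can then apply Proposition~\ref{PropvissubcompHJBM} for this $M$ to conclude that $u \leq v^M$ in $Q_T$. As this holds for every $M > 0$, and since Remark~\ref{remarkKTQMT} gives $v(x,t) = \inf_{M > 0} v^M(x,t)$ for all $(x,t) \in Q_T$, taking the infimum over $M$ yields $u \leq v$ in $Q_T$, as required.

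I do not expect any genuine obstacle here: all the real work is contained in Proposition~\ref{PropvissubcompHJBM}, and the only points to verify are the elementary monotonicity $H_M \leq H$ together with its immediate consequence for subsolutions, and the identity $v = \inf_{M>0} v^M$ on $Q_T$ — both of which are already at hand. The one minor point worth flagging is that the boundedness of controls (essential in the proof of Proposition~\ref{PropvissubcompHJBM}) is used only inside that proposition; here we merely hand it a subsolution, so no extra restriction on $u$ is needed.
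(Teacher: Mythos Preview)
Your proposal is correct and follows essentially the same approach as the paper: both use $H_M \leq H$ to see that $u$ is a subsolution of the modified equation, then apply Proposition~\ref{PropvissubcompHJBM} to get $u \leq v^M$, and finally pass to $v$. The only cosmetic difference is that the paper carries out the last step via an $\varepsilon$-argument (choosing, for a fixed point and $\varepsilon > 0$, a near-optimal control which is bounded on $[0,\ot]$ and hence lies in some $\cA^M$), whereas you invoke directly the identity $v = \inf_{M>0} v^M$ on $Q_T$ from Remark~\ref{remarkKTQMT}; these are equivalent.
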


\begin{proof}
Fix an arbitrary point $(\ox,\ot) \in Q_T$, and let $\varepsilon > 0$. Then there exists a control $(\overline{\alpha},\overline{\beta}) \in \cA$ such that $J(\ox,\ot;\overline{\alpha},\overline{\beta}) < v(\ox,\ot) + \varepsilon$.

As we are only interested in the value of this control on the compact time interval $[0,\ot]$, we may assume without loss of generality that it is uniformly bounded, so that there exists an $M > 0$ such that $(\overline{\alpha},\overline{\beta}) \in \cA^M$.

Since $u$ is a viscosity subsolution of the HJB equation \eqref{eq:HJB}, and $H_M \leq H$, it follows that $u$ is also a viscosity subsolution of the modified HJB equation \eqref{eq:HJBM} in $Q_T$. By Proposition~\ref{PropvissubcompHJBM}, we have that $u \leq v^M$ in $Q_T$. In particular, since $(\overline{\alpha},\overline{\beta}) \in \cA^M$, we have that
$$u(\ox,\ot) \leq v^M(\ox,\ot) \leq J(\ox,\ot;\overline{\alpha},\overline{\beta}) < v(\ox,\ot) + \varepsilon.$$
Letting $\varepsilon \to 0$, we deduce the result.
\end{proof}

\begin{proposition}\label{PropvissupercompHJB}
Suppose that $\hu$ is a viscosity supersolution of the HJB equation \eqref{eq:HJB} in $Q$. Assume that $\hu \in \cVt$, and that $v_0(x) \leq \hu(x,0)$ for all $x \in U$. Then $v \leq \hu$ in $Q$.
\end{proposition}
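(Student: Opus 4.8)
The plan is to fix an arbitrary point $(\ox,\ot) \in Q$ and show $v(\ox,\ot) \le \hu(\ox,\ot)$; since $(\ox,\ot)$ is arbitrary this gives $v \le \hu$ on $Q$. The idea is \emph{not} to try to compare $v$ and $\hu$ directly on an unbounded set (where the explosive behaviour near $\partial U$ is incomparable), but rather to truncate $v$ at a large level $C$ and compare it against $\hu$ on the bounded sublevel set $\{\hu < C\}$, where the required inequality on the parabolic boundary becomes automatic, and then invoke the comparison theorem of Barles \cite{AchdouBarlesLitvinovIshii2013}.

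In detail, pick $C > \max\{v(\ox,\ot),\hu(\ox,\ot)\}$ and $T_0 > \ot$, and set $\underline{u} := \min\{v,C\}$. Then $\underline{u}$ is a viscosity subsolution of \eqref{eq:HJB} on $Q$: indeed $v$ is (Proposition~\ref{valuefuncvissolnHJB}), the constant $C$ is, since $H(x,t,0) = \sup_{(a,b)}\{-\gamma(t,a,b)\} = -\inf_{(a,b)}\gamma(t,a,b) \le 0$ by nonnegativity of $\gamma$, and the minimum of two subsolutions of a first-order equation is again a subsolution. Moreover $\underline{u}$ is locally Lipschitz, as $v$ is (Theorem~\ref{valuefncloclip}), and $\underline{u}(\cdot,0) = \min\{v_0,C\}$ since $v(\cdot,0)=v_0$. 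Now consider the open set $\mathcal{O} := \{(x,t) \in Q : t < T_0,\ \hu(x,t) < C\}$. It contains $(\ox,\ot)$, and since $\hu \in \cVt$ we have $\inf_{0\le t\le T_0}\hu(x,t) \to \infty$ both as $|x|\to\infty$ and as $x_2 \to 0^+$, so the spatial projection of $\mathcal{O}$ lies in a compact $K_0 \subset U$ and $\overline{\mathcal{O}} \subset K_0 \times [0,T_0]$; in particular $\mathcal{O}$ is bounded and bounded away from $\partial U$. On the part of $\partial\mathcal{O}$ in $\{t=0\}$ one has $\underline{u}(x,0) = \min\{v_0(x),C\} \le v_0(x) \le \hu(x,0)$, and on the part where $\hu = C$ one has $\underline{u} \le C = \hu$, so $\underline{u} \le \hu$ on the parabolic boundary of $\mathcal{O}$. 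Applying the comparison theorem on $\mathcal{O}$ with subsolution $\underline{u}$ and supersolution $\hu$ then yields $\underline{u} \le \hu$ on $\mathcal{O}$, whence $v(\ox,\ot) = \underline{u}(\ox,\ot) \le \hu(\ox,\ot)$.

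The step requiring care — and the only one that is not bookkeeping — is the application of the comparison theorem despite the quadratic-in-$x$ growth of $H$, for which \eqref{eq:xlipboundonHM} holds only for the truncated Hamiltonian $H_M$. The point is that in a doubling-of-variables argument on $\mathcal{O}$ the penalisation variable $(\hat x-\hat y)/\varepsilon$ is controlled by the Lipschitz constant $L$ of the subsolution $\underline{u}$ on $\overline{\mathcal{O}}$ (testing the maximum against $x=y=\hat y$ gives $|\hat x-\hat y|/(2\varepsilon) \le L$), so only the values of $H$ on the compact set $K_0 \times [0,T_0] \times \overline{B}_{2L}(0)$ are ever seen; on such a set $H \equiv H_M$ for $M$ large (by the continuity-of-$H$ lemma), and $H_M$ obeys the structural bound \eqref{eq:xlipboundonHM}, so Barles' Theorem~5.1 applies as in a bounded-Hamiltonian setting. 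The remaining points — that $\mathcal{O}$, though not of product form, is an admissible bounded domain for the comparison argument, and the routine treatment of its $\{t=T_0\}$ face (which is not part of the parabolic boundary for the forward equation \eqref{eq:HJB}) — present no difficulty.
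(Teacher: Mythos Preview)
Your argument is correct and takes a genuinely different route from the paper's proof. The paper constructs an auxiliary value function $V$ by modifying the initial cost $v_0$ outside a compact set to a finite, slowly growing $V_0$; it then invokes the control-theoretic ``locality'' principle (that $v$ depends only on local values of $v_0$) to get $V=v$ on $K\times(0,T)$, chooses $V_0$ to grow slowly enough that $V\le\hu$ on the lateral boundary of a large cylinder $E\times(0,T)$, and applies Barles' comparison there. Your approach bypasses this construction entirely: truncating $v$ by a constant via $\underline u=\min\{v,C\}$ is automatic once one notes that $H(\cdot,\cdot,0)\le 0$ (from $\gamma\ge 0$), and working on the sublevel set $\{\hu<C\}$ makes the lateral boundary inequality $\underline u\le C=\hu$ immediate from the choice of $C$, with boundedness of the domain coming for free from $\hu\in\cVt$. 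This is more PDE-intrinsic and shorter; the paper's route, on the other hand, makes the locality-of-$v$ mechanism explicit and reusable (indeed it is invoked again in the numerical appendix).

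Both proofs handle the quadratic-in-$x$ Hamiltonian in the same way: the subsolution ($V$ for the paper, $\underline u$ for you) inherits local Lipschitz regularity from $v$ (Theorem~\ref{valuefncloclip}), which bounds the doubling-of-variables momentum and reduces to $H_M$ on the relevant compact. One cosmetic point: rather than arguing directly on the non-cylindrical set $\mathcal O$, it is cleaner to enclose $\mathcal O$ in a cylinder $K_0\times(0,T_0)$ with $K_0\Subset U$ chosen so that its spatial boundary lies outside the spatial projection of $\mathcal O$; then on $\pa K_0\times[0,T_0]$ one has $\hu\ge C\ge\underline u$ by construction, and Barles' Theorem~5.1 applies verbatim on a product domain.
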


\begin{proof}
Fix a $T > 0$ and an arbitrary compact set $K \subset U$. We can argue, exactly as in the proof of Proposition~\ref{PropvissubcompHJBM}, that there exists a constant $C_0 > 0$ such that, when considering the value of $v(x,t)$ for $x \in K$ and $t \in (0,T)$, it is sufficient to consider only those controls $(\alpha,\beta)$ which satisfy
\begin{equation}\label{eq:boundonv0again}
v_0\big(w(0;x,t,\alpha,\beta)\big) < C_0.
\end{equation}

Since $v_0 \in \cV$, there exists $\delta > 0$ sufficiently small such that $K \subset U_\delta$, and such that $v_0 \geq C_0$ in $U \setminus U_\delta$, where $U_\delta := \{x = (x_1,x_2) \in U\, :\, |x| < 1/\delta,\ x_2 > \delta\}$. For this $\delta$, let $V_0$ be a finite-valued, locally Lipschitz function on $\R^2$ such that $V_0 = v_0$ in $U_\delta$, and $V_0 \geq C_0$ in $\R^2 \setminus U_\delta$, and define
\begin{equation}\label{eq:Vdefnforsuper}
V(x,t) := \inf_{(\alpha,\beta) \in \cA}\bigg\{\int_0^t\gamma(s,\alpha_s,\beta_s)\,\rd s + V_0\big(w(0;x,t,\alpha,\beta)\big)\bigg\}
\end{equation}
for $x \in U$ and $t \geq 0$. As usual, we associate an infinite `initial' cost to state trajectories which `blow up' after a finite time. The function $V$ is defined similarly to the value function $v$, except with initial cost function $V_0$ instead of $v_0$. Note however that, since $V_0$ is finite-valued in the entirety of $\R^2$, the state trajectories are allowed to leave the domain $U$. It is for this reason that we explicitly defined the function $f$ for values of $x \in \R^2 \setminus U$.

We can argue, exactly as in the first part of the proof of Proposition~\ref{valuefuncvissolnHJB}, that $V$ is a viscosity subsolution of the HJB equation \eqref{eq:HJB} in $Q$, with initial condition $V(x,0) = V_0(x)$ for all $x \in U$.

Since $v_0 \geq C_0$ and $V_0 \geq C_0$ in $\R^2 \setminus U_\delta$, it follows from \eqref{eq:boundonv0again} that when considering the values of $v(x,t)$ and $V(x,t)$ for $x \in K$ and $t \in (0,T)$, we may ignore all controls $(\alpha,\beta)$ such that the corresponding initial point $w(0;x,t,\alpha,\beta)$ lies in $\R^2 \setminus U_\delta$. Since, $v_0$ and $V_0$ are equal in $U_\delta$, we deduce that
\begin{equation}\label{eq:vequalVonK}
v(x,t) = V(x,t) \qquad \text{for all} \quad\ x \in K,\ t \in (0,T).
\end{equation}

Heuristically, if $V_0$ grows slowly as $|x| \to \infty$, then $V$ should also grow slowly as $|x| \to \infty$. In particular, since $\hu \in \cVt$, if $V_0$ grows sufficiently slowly, then we should be able to ensure that $V$ grows slower than $\hu$, so that $\hu$ dominates $V$ asymptotically. We will now make this more precise.

Let us write $(\alpha^0,\beta^0)$ for the control satisfying $(\alpha_s,\beta_s) = (0,0)$ for all $s \geq 0$, and $w^0(\cdot) = w(\cdot\,;x,t,\alpha^0,\beta^0)$ for the corresponding state trajectory. Recall the function $f$ which defines the ODE satisfied by the state trajectories, given by \eqref{eq:defnf} and \eqref{eq:defnfx2neg}. In this case we have
$$\frac{\rd w^0}{\rd s}(s) = f(w^0(s),s,0,0) = \bigg(\begin{array}{c}
0\\
c_s^2
\end{array}\bigg)$$
so that
$$w^0(0) = x - \bigg(\begin{array}{c}
0\\
\int_0^tc_s^2\,\rd s
\end{array}\bigg).$$
In other words, for any point $(x,t) \in Q_T$, there exists a control such that the corresponding state trajectory $w(\cdot)$ `doesn't move very far'.

Recalling Assumption~\ref{assumptiongammafinite}, since
$$V(x,t) \leq V_0(w^0(0)) + T\sup_{0 \leq s \leq T}\gamma(s,0,0),$$
and $\hu \in \cVt$, we deduce that the function $V_0$ may be chosen so that $V_0(x) \to \infty$ as $|x| \to \infty$, but such that this convergence to infinity is sufficiently slow to ensure that $V_0(x) \leq v_0(x)$ for all $x \in U$, and also that
$$\sup_{0 \leq t \leq T}V(x,t) \leq \inf_{0 \leq t \leq T}\hat{u}(x,t)$$
for all sufficiently large values of $|x|$ (with $x_2 > 0$), and for all sufficiently small values of $x_2$. Thus, there exists a bounded open subset $E \Subset U$, such that $K \subset E$, and such that $V \leq \hat{u}$ on $\pa E \times [0,T]$.

Since $V_0$ is locally Lipschitz continuous and $V_0(x) \to \infty$ as $|x| \to \infty$, the result of Theorem~\ref{valuefncloclip} also applies to $V$, and implies that it is also locally Lipschitz continuous. The only addition to the proof of Theorem~\ref{valuefncloclip} in this case is to check that the estimate \eqref{eq:logbound0r} still holds when the trajectories are in $\R^2 \setminus U$; however this is easy, as $f$ is simply given by \eqref{eq:defnfx2neg} for $x \in \R^2 \setminus U$.

As $V(x,0) = V_0(x) \leq v_0(x) \leq \hat{u}(x,0)$ for all $x \in U$, we see that $V \leq \hat{u}$ on the parabolic boundary of $E \times (0,T)$. Since $V$ is Lipschitz continuous on $\overline{E} \times [0,T]$, we may apply Theorem~5.1 in Barles \cite{AchdouBarlesLitvinovIshii2013} on the subdomain $E \times (0,T)$ with the subsolution $V$ and the supersolution $\hat{u}$, to deduce that $V \leq \hat{u}$ in $E \times (0,T)$. By \eqref{eq:vequalVonK} we have that $v \leq \hat{u}$ in $K \times (0,T)$. Since $K$ and $T$ were arbitrary, we have the result.
\end{proof}

Theorem~\ref{TheoremUniqueness} now follows by combining the results of Corollary~\ref{CorovissubcompHJB} and Proposition~\ref{PropvissupercompHJB}.

\begin{remark}[The multivariate case]\label{RemarkMultivariate}
For simplicity of presentation we have assumed that the signal process $X$ and observation process $Y$ are one-dimensional. This led us to an HJB equation where the spatial variable $x$ corresponds (via a change of variables) to the mean and variance of the conditional distribution of the signal, and consequently lives in a half-plane.

We now remark that all of our analysis follows analogously in the multivariate case, where $X$ and $Y$ take values in, say, $\R^d$ and $\R^m$ respectively. In this case $x_1$ takes values in $\R^d$, and $x_2$ takes values in the space of positive definite symmetric $d \times d$ matrices. This spatial domain does not have such a straightforward geometric interpretation, but it is still perfectly valid to consider existence and uniqueness of solutions of the corresponding HJB equation on this domain.

One should note that the derivation of the key estimate in Lemma~\ref{lemmakeyest} is still valid in the multivariate case, as we can still drop the highest order terms in $w$, using the fact that $w_2(s)$ is positive definite and symmetric, and $\beta_s$ is now a positive semi-definite symmetric matrix.

One limitation of this approach is that our change of variables requires the conditional covariance matrix $R$ to be invertible, so assuming it to be merely positive semi-definite is not enough. The initial cost function $v_0(w(0))$ should therefore be infinite-valued whenever $w_2(0)$ is not positive definite, and satisfy a coercivity condition analogous to Definition~\ref{defncV}.
\end{remark}

\section{Numerical example}\label{Sectionnumericalexample}

We conclude with a numerical example. We simulate a realisation of the signal process $X$ with `true' parameters as in the table (where for simplicity $\alpha$ and $\beta$ are taken to be constant in time), and the observation process $Y$ with $c = 1$. We then consider the problem of estimating $X$ using the `estimated' parameters as shown.

\begin{center}
\begin{tabular}{ |c|c|c|c|c| }
\hline
true parameters & $\alpha = 0.5$ & $\beta = 1.5$ & $\mu_0 = 1$ & $\sigma_0 = 0.2$ \\
\hline
estimated parameters & $\alpha^\ast = 0$ & $\beta^\ast = 1$ & $\mu_0^\ast = 0$ & $\sigma_0^\ast = 1$ \\
\hline
\end{tabular}
\end{center}
We use the penalty functions given by
\begin{gather}
\gamma(t,a,b) = 5(a - \alpha^\ast)^2 + 10(b - \beta^\ast)^2,\label{eq:gammaexample}\\
v_0(x_1,x_2) = 15(x_1 - x_1^\ast)^2 + 15(x_2 - x_2^\ast)^2,\label{eq:v0example}
\end{gather}
where $x_1^\ast = \frac{\mu_0^\ast}{(\sigma_0^\ast)^2}$ and $x_2^\ast = \frac{1}{(\sigma_0^\ast)^2}$.

After solving the HJB equation \eqref{eq:HJB} for the value function $v$ (see the Appendix for the numerical scheme used in the current example), we reverse the change of variables performed in Section~\ref{SectionReformulation} to obtain the penalty function $\kappa$. We are then in a position to calculate the nonlinear expectation via \eqref{eq:NLErelatestokappa}. In this example we choose $k_1 = 10$ and $k_2 = 5$.

In Figure~\ref{fig:estimate_X} we compare a robust minimax estimate of the signal, calculated as $\argmin_{\xi} \cE\big((X_t - \xi)^2\,\big|\,\cY_t\big)$, with the standard Kalman--Bucy filter, itself calculated with both the esimated parameters ($\E^{\text{est}}$) and with the true parameters ($\E^{\text{true}}$). The latter is of course only computable with knowledge of the true parameters, and theoretically represents the `best' estimate one could hope to achieve.

\begin{figure}[!ht]
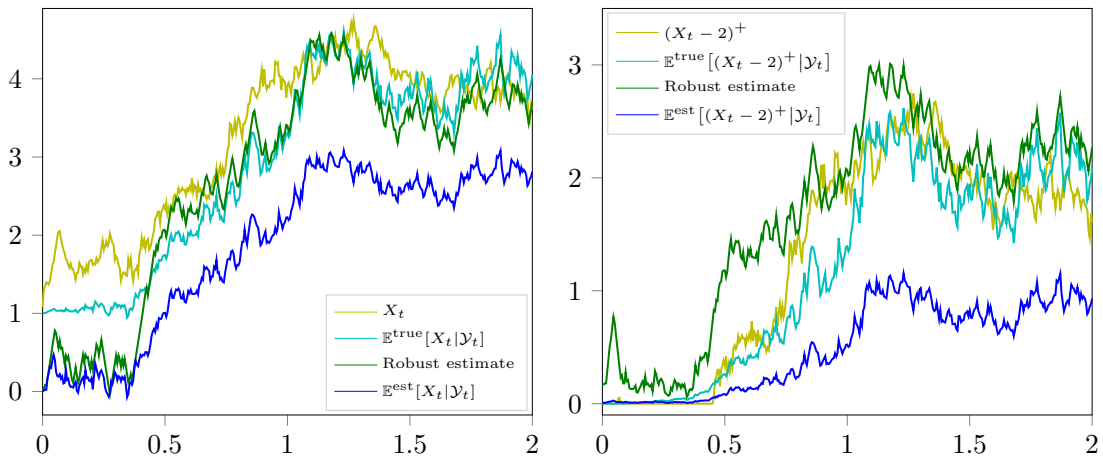

\centering
\begin{subfigure}{.49\textwidth}
  \centering
  \input{estimate_X}
  \caption{Estimates of $X_t$.}
  \label{fig:estimate_X}
\end{subfigure}
\hfill
\begin{subfigure}{.49\textwidth}
  \centering
  \input{estimate_X-2+}
  \caption{Estimates of $(X - 2)^+$.}
  \label{fig:estimate_(X-2)+}
\end{subfigure}
\caption{Comparison of standard and robust filter estimates.}
\end{figure}

As mentioned in the introduction, the results of the majority of the existing literature on robust filtering allow for estimation only of the state of the signal, or linear functions thereof. A significant strength of the current approach is that it also enables one to compute estimates of nonlinear functions of the signal. Moreover, this can be done using the same penalty function---that is, we do not have to solve the PDE again in order to perform each subsequent estimate. To illustrate this, in Figure~\ref{fig:estimate_(X-2)+} we compute a robust estimate of the variable $(X_t - 2)^+$\hspace{-2pt}, i.e.~$\max(X_t - 2,0)$, as $\argmin_{\xi}\cE\big(\big((X_t - 2)^+ - \xi\big)^{\hspace{-1pt}2}\,\big|\,\cY_t\big)$, and compare it with standard filter estimates.

We also calculate the `upper' and `lower' expectations of the signal, i.e.~$\cE(X_t\,|\,\cY_t)$ and $-\cE(-X_t\,|\,\cY_t)$ respectively, which can be thought of as providing error bounds for the Kalman--Bucy filter, and are shown in Figure~\ref{fig:upper_lower_X}. Finally, Figure~\ref{fig:upper_lower_(X-2)+} shows the upper and lower expectations of $(X_t - 2)^+$\hspace{-2pt}. We note in particular the asymmetry between the upper and lower expectations around the Kalman--Bucy filter, in contrast to classical equal-sided confidence intervals.

\begin{figure}[!ht]
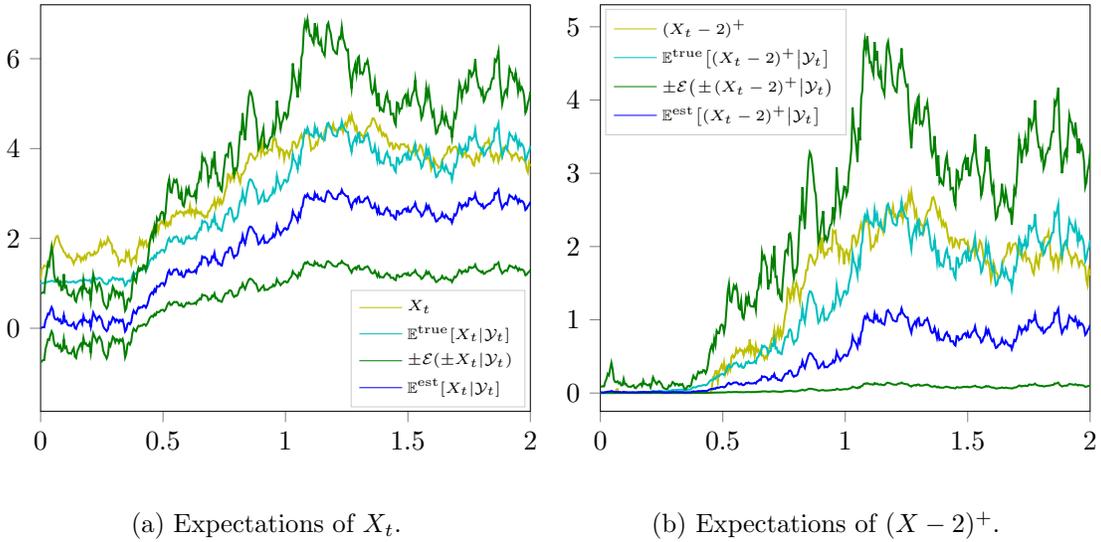

\centering
\begin{subfigure}{.49\textwidth}
  \centering
  \input{upper_lower_X}
  \caption{Expectations of $X_t$.}
  \label{fig:upper_lower_X}
\end{subfigure}
\hfill
\begin{subfigure}{.49\textwidth}
  \centering
  \input{upper_lower_X-2+}
  \caption{Expectations of $(X - 2)^+$.}
  \label{fig:upper_lower_(X-2)+}
\end{subfigure}
\caption{Robust upper and lower expectations.}
\end{figure}

\section*{Appendix -- A numerical scheme}

We briefly outline a numerical scheme to compute the solution of the HJB equation \eqref{eq:HJB}, which was used to obtain the robust filter estimates exhibited in Section~\ref{Sectionnumericalexample}.

As the `reference' parameters $\alpha^\ast, \beta^\ast, \mu_0^\ast, \sigma_0^\ast$ are a priori considered to be the best estimates for the true parameters, we set their penalty to zero. That is, our penalty functions $\gamma$ and $v_0$ will naturally be equal to zero for this choice of parameters, as in \eqref{eq:gammaexample}--\eqref{eq:v0example}. Thus, recalling the change of variables in \eqref{eq:changevariablesmusigmaxt}, and the definition of the value function $v$ in \eqref{eq:valuefuncdefn}, we note that the transformed reference dynamics, given by
$$w^\ast(t) = \bigg(\frac{q^\ast_t}{R^\ast_t} - \eta_t,\,\frac{1}{R^\ast_t}\bigg),$$
where $q^\ast$ and $R^\ast$ are the conditional mean and variance calculated using the reference parameters, traces out the minimum point of the value function. That is, we have that $v(w^\ast(t),t) = 0$ for every $t \geq 0$. Since in practice we are only interested in the local behaviour of $v$ around its minimum, we only need to solve the PDE in a finite subdomain centred at this (moving) minimum point.

Defining $\bar{v}(\zeta,t) = v(w^\ast(t) + \zeta,t)$, the HJB equation \eqref{eq:HJB} becomes
\begin{equation*}
\frac{\pa\bar{v}}{\pa t}(\zeta,t) + \sup_{(a,b) \in \R \times [0,\infty)}\big\{\bar{f}(\zeta,t,a,b)\cdot\nabla\bar{v}(\zeta,t) - \gamma(t,a,b)\big\} = 0,
\end{equation*}
where
\begin{equation*}
\bar{f}(\zeta,t,a,b) = f(w^\ast(t) + \zeta,t,a,b) - f(w^\ast(t),t,\alpha^\ast_t,\beta^\ast_t).
\end{equation*}

Since the value function $v$ is unbounded (indeed, it `blows up' at the boundary in the sense of Definition~\ref{defnVtloc}) we instead solve for the function $\lambda = -1/(1 + \bar{v})$, which takes values in the bounded interval $[-1,0)$, and satisfies the equation
\begin{equation}\label{eq:HJBlambda}
\frac{\pa\lambda}{\pa t}(\zeta,t) + \sup_{(a,b) \in \R \times [0,\infty)}\Big\{\bar{f}(\zeta,t,a,b)\cdot\nabla\lambda(\zeta,t) - \lambda(\zeta,t)^2\gamma(t,a,b)\Big\} = 0,
\end{equation}
with the boundary condition that $\lambda(\zeta,t) \to 0$ as $|\zeta| \to \infty$ and as $\zeta_2 \searrow -w^\ast(t)_2$. Since the transformation from $\bar{v}$ to $\lambda$ is monotone increasing, the corresponding viscosity theory carries over.

We approximate the solution of \eqref{eq:HJBlambda} with an explicit finite difference scheme on a finite rectangular domain, centred at $\zeta = 0$, on the boundary of which we impose a zero boundary condition.

\begin{remark}
Although the function $\lambda$ is not actually equal to zero on the boundary of a finite subdomain, this procedure does lead to a convergent scheme as we increase the size of the domain. This can be seen by recalling, from the proofs of Propositions~\ref{PropvissubcompHJBM} and \ref{PropvissupercompHJB}, the fact that $v$ (and hence $\lambda$) only depends on the local values of its initial condition, and thus, on any given inner region, is unaffected by perturbations in any sufficiently large outer region, provided that such perturbations only increase the value of $v$ (or $\lambda$).
\end{remark}

By the comparison results obtained in Section~\ref{SectionUniqueness}, since $\lambda$ is bounded, we recover the setting of Barles and Souganidis \cite{BarlesSouganidis1991}. To establish monotonicity we use an upwind scheme, truncating large values of the drift $\bar{f}$ in order to ensure a suitable Courant--Friedrichs--Lewy condition holds. By Theorem~2.1 in \cite{BarlesSouganidis1991}, the scheme is uniformly convergent, and by the previous remark, the entire solution $\lambda$ may be obtained by increasing the size of the domain. The value function $v$ may then be recovered via $v(x,t) = -1 - 1/\lambda(x - w^\ast(t),t)$.


\begin{thebibliography}{10}

\bibitem{AchdouBarlesLitvinovIshii2013}
Y. Achdou, G. Barles, G.L. Litvinov and H. Ishii,
\emph{Hamilton--Jacobi Equations: Approximations, Numerical Analysis and Applications,}
Springer-Verlag, Berlin, Heidelberg,
2013.

\bibitem{AliyuBoukas2009}
M.D.S. Aliyu and E.K. Boukas,
Mixed $H_2/H_\infty$ nonlinear filtering,
\emph{Int. J. Robust Nonlinear Control,}
19(4) 2009, pp. 394--417.

\bibitem{BainCrisan2009}
A. Bain and D. Crisan,
\emph{Fundamentals of Stochastic Filtering,}
Springer, New York,
2009.

\bibitem{BardiDaLio1997}
M. Bardi and F. Da Lio,
On the Bellman equation for some unbounded control problems,
\emph{Nonlinear Differ. Equ. Appl.,}
4(4) 1997, pp. 491--510.

\bibitem{BarlesSouganidis1991}
G. Barles and P.E. Souganidis,
Convergence of approximation schemes for fully nonlinear second order equations,
\emph{Asymptotic Analysis,}
4(3) 1991, pp. 271--283.

\bibitem{BernsteinHaddad1989}
D.S. Bernstein and W.M. Haddad,
Steady-state Kalman filtering with an $H_\infty$ error bound,
\emph{Systems \& Control Letters,}
12(1) 1989, pp. 9--16.

\bibitem{Borisov2008}
A.V. Borisov,
Minimax a posteriori estimation of the Markov processes with finite state spaces,
\emph{Autom. Remote Control,}
69(2) 2008, pp. 233--246.

\bibitem{Borisov2011}
A.V. Borisov,
The Wonham filter under uncertainty: a game-theoretic approach,
\emph{Automatica,}
47(5) 2011, pp. 1015--1019.

\bibitem{BuckdahnMa2007}
R. Buckdahn and J. Ma,
Pathwise stochastic control problems and stochastic HJB equations,
\emph{SIAM J. Control Optim.,}
45(6) 2007, pp. 2224--2256.

\bibitem{ChenZhou2002}
X. Chen and K. Zhou,
$H_\infty$ Gaussian filter on infinite time horizon,
\emph{IEEE Trans. Circuits Syst. I: Fundam. Theory Appl.,}
49(5) 2002, pp. 674--679.

\bibitem{Cipra1993}
B. Cipra,
Engineers look to Kalman filtering for guidance,
\emph{SIAM News,}
26(5) 1993.

\bibitem{Cohen2016discretefiltering}
S.N. Cohen,
Uncertainty and filtering of hidden Markov models in discrete time,
arXiv:1606.00229v3,
2016.

\bibitem{Cohen2017statisticaluncertainty}
S.N. Cohen,
Data-driven nonlinear expectations for statistical uncertainty in decisions,
\emph{Electronic Journal of Statistics,}
11(1) 2017, pp. 1858--1889.

\bibitem{CohenElliott2015}
S.N. Cohen and R.J. Elliott,
\emph{Stochastic Calculus and Applications,}
Springer, New York,
2nd ed., 2015.

\bibitem{CrandallIshiiLions1992}
M.G. Crandall, H. Ishii and P.-L. Lions,
User's guide to viscosity solutions of second order partial differential equations,
\emph{Bull. Amer. Math. Soc.,}
27(1) 1992, pp. 1--67.

\bibitem{CrisanRozovskii2011}
D. Crisan and B. Rozovskii,
\emph{The Oxford handbook of nonlinear filtering,}
Oxford University Press,
2011.

\bibitem{DelbaenPengRosazzaGianin2010}
F. Delbaen, S. Peng and E. Rosazza Gianin,
Representation of the penalty term of dynamic concave utilities,
\emph{Finance and Stochastics,}
14(3) 2010, pp. 449--472.

\bibitem{Evans2010}
L.C. Evans,
\emph{Partial Differential Equations,}
American Mathematical Society,
2nd ed., 2010.

\bibitem{FollmerSchied2002}
H. F\"ollmer and A. Schied,
Convex measures of risk and trading constraints,
\emph{Finance and Stochastics,}
6(4) 2002, pp. 429--447.

\bibitem{FollmerSchied2004}
H. F\"ollmer and A. Schied,
\emph{Stochastic Finance, An Introduction in Discrete Time,}
De Gruyter, Berlin,
4th ed., 2016.

\bibitem{FrittelliRosazzaGianin2002}
M. Frittelli and E. Rosazza Gianin,
Putting order in risk measures,
\emph{Journal of Banking and Finance,}
26(7) 2002, pp. 1473--1486.

\bibitem{George2013}
J. George,
Robust Kalman--Bucy filter,
\emph{IEEE Trans. Automat. Contr.,}
58(1) 2013, pp. 174--180.

\bibitem{GrewalAndrews2010}
M.S. Grewal and A.P. Andrews,
Applications of Kalman filtering in aerospace 1960 to the present,
\emph{IEEE Control Systems Magazine,}
30(3) 2010, pp. 69--78.

\bibitem{Kalman1960}
R.E. Kalman,
A new approach to linear filtering and prediction problems,
\emph{J. Basic Eng.,}
82(1) 1960, pp. 35--45.

\bibitem{KalmanBucy1961}
R.E. Kalman and R.S. Bucy,
New results in linear filtering and prediction theory,
\emph{J. Basic Eng.,}
83(1) 1961, pp. 95--108.

\bibitem{KhargonekarRoteaBaeyens1996}
P.P. Khargonekar, M.A. Rotea and E. Baeyens,
Mixed $H_2/H_\infty$ filtering,
\emph{Int. J. Robust Nonlinear Control,}
6(4) 1996, pp. 313--330.

\bibitem{LewisXiePopa2008}
F. Lewis, L. Xie and D. Popa,
\emph{Optimal and Robust Estimation,}
CRC Press, Taylor \& Francis Group,
2nd ed., 2008.

\bibitem{LionsSouganidis1985}
P.L. Lions and P.E. Souganidis,
Differential games, optimal control and directional derivatives of viscosity solutions of Bellman's and Isaacs' equations,
\emph{SIAM J. Control Optim.,}
23(4) 1985, pp. 566--583.

\bibitem{LiptserShiryaev2001}
R.S. Liptser and A.N. Shiryaev,
\emph{Statistics of Random Processes I-II,}
Springer-Verlag, Berlin,
2nd ed., 2001.

\bibitem{MartinMintz1983}
C. Martin and M. Mintz,
Robust filtering and prediction for linear systems with uncertain dynamics: a game-theoretic approach,
\emph{IEEE Trans. Automat. Contr.,}
28(9) 1983, pp. 888--896.

\bibitem{MillerPankov2005}
G.B. Miller and A.R. Pankov,
Filtration of a random process in a statistically uncertain linear stochastic differential system,
\emph{Autom. Remote Control,}
66(1) 2005, pp. 53--64.

\bibitem{NeveuxBlancoThomas2007}
P. Neveux, E. Blanco and G. Thomas,
Robust filtering for linear time-invariant continuous systems,
\emph{IEEE Trans. Sig. Process.,}
55(10) 2007, pp. 4752--4757.

\bibitem{Siemenikhin2016}
K.V. Siemenikhin,
Minimax linear filtering of a random sequence with uncertain covariance function,
\emph{Autom. Remote Control,}
77(2) 2016, pp. 226--241.

\bibitem{SiemenikhinLebedevPlatonov2005}
K.V. Siemenikhin, M.V. Lebedev and E.P. Platonov,
Kalman filtering by minimax criterion with uncertain noise intensity functions,
\emph{Proceedings of the 44\textsuperscript{th} IEEE CDC-ECC,}
2005, pp. 1929--1934.

\bibitem{VerduPoor1984}
S. Verd\'u and H.V. Poor,
Minimax linear observers and regulators for stochastic systems with uncertain second-order statistics,
\emph{IEEE Trans. Automat. Contr.,}
29(6) 1984, pp. 499--511.

\bibitem{Wald1945}
A. Wald,
Statistical decision functions which minimize the maximum risk,
\emph{Annals of Mathematics,}
46(2) 1945, pp. 265--280.

\bibitem{XiedeSouzaFu1991}
L. Xie, C. de Souza and M. Fu,
$H_\infty$ estimation for discrete-time linear uncertain systems,
\emph{Int. J. Robust Nonlin.,}
1(2) 1991, pp. 111--123.

\bibitem{YangYe2009}
G.-H. Yang and D. Ye,
Robust $H_\infty$ filter design for linear systems with time-varying uncertainty,
\emph{International Journal of Control,}
82(3) 2009, pp. 517--524.

\bibitem{YongZhou1999}
J. Yong and X.Y. Zhou,
\emph{Stochastic Controls, Hamiltonian Systems and HJB Equations,}
Springer-Verlag, New York,
1999.

\bibitem{ZhangXiaShi2009}
J. Zhang, Y. Xia and P. Shi,
Parameter-dependent robust $H_\infty$ filtering for uncertain discrete-time systems,
\emph{Automatica,}
45(2) 2009, pp. 560--565.

\bibitem{Zhou2010}
T. Zhou,
Sensitivity penalization based robust state estimation for uncertain linear systems,
\emph{IEEE Trans. Automat. Contr.,}
55(4) 2010, pp. 1018--1024.

\end{thebibliography}
\end{document}